\documentclass[11pt,reqno,oneside]{amsproc}
\title[Euler immersion]{Euler Immersion}
\author[I.~Kukavica]{Igor Kukavica}
\address{Department of Mathematics, University of Southern California, Los Angeles, CA 90089}
\email{kukavica@usc.edu}
\author[A.~Tuffaha]{Amjad Tuffaha}
\address{Department of Mathematics and Statistics, American University
of Sharjah, Sharjah, UAE}
\email{atufaha\char'100aus.edu}
\author[Q.~Xu]{Qi Xu}
\address{Department of Mathematics, University of Southern California, Los Angeles, CA 90089}
\email{xuqi@usc.edu}
  \chardef\forshowkeys=0
  \chardef\showllabel=0
  \chardef\refcheck=0
  \chardef\sketches=0
\usepackage{enumitem}
\usepackage{datetime}
\usepackage{fancyhdr}
\usepackage{comment}
\allowdisplaybreaks
\ifnum\forshowkeys=1

  \usepackage[notref,notcite,color]{showkeys}
\fi
\usepackage[margin=1in]{geometry}
\usepackage{amsmath, amsthm, amssymb}
\usepackage{times}
\usepackage{graphicx}
\usepackage[dvipsnames,svgnames,table]{xcolor}
\usepackage{marginnote}
\usepackage[unicode,breaklinks=true,colorlinks=true,linkcolor=black,urlcolor=black,citecolor=black]{hyperref}
\usepackage[most]{tcolorbox}
\usepackage{tikz}

\ifnum\refcheck=1
  \usepackage{refcheck}
\fi
\usepackage{tikz}
\usetikzlibrary{arrows.meta,decorations.pathmorphing}
\begin{document}
\def\YY{X}
\def\OO{\mathcal O}
\def\SS{\mathbb S}
\def\CC{\mathbb C}
\def\RR{\mathbb R}
\def\ZZ{\mathbb Z}
\def\HH{\text{H}}
\def\RSZ{\mathcal R}
\def\LL{\mathcal L}
\def\SL{\LL^1}
\def\ZL{\LL^\infty}
\def\GG{\mathcal G}
\def\tt{\langle t\rangle}
\def\erf{\mathrm{Erf}}
\def\mgt#1{\textcolor{magenta}{#1}}
\def\ff{\rho}
\def\gg{G}
\def\sqrtnu{\sqrt{\nu}}
\def\ww{w}
\def\ft#1{#1_\xi}
\def\lec{\lesssim}
\def\gec{\gtrsim}
\def\bard{\bar{\partial}}
\renewcommand*{\Re}{\ensuremath{\mathrm{{\mathbb R}e\,}}}
\renewcommand*{\Im}{\ensuremath{\mathrm{{\mathbb I}m\,}}}
\ifnum\showllabel=1
 \def\llabel#1{\marginnote{\color{lightgray}\rm\small(#1)}[-0.0cm]\notag}
 \def\llabel{\label}
%%%%%%%%%%%%%%%%%%%\def\llabel#1{\label{#1}}
%  \reversemarginpar
%  \def\llabel#1{\notag}
%  \def\llabel#1{\label{#1}}
\else
% % \def\llabel#1{\nonumber}
 \def\llabel#1{\notag}
\fi
\newcommand{\norm}[1]{\left\|#1\right\|}
\newcommand{\nnorm}[1]{\lVert #1\rVert}
\newcommand{\abs}[1]{\left|#1\right|}
\newcommand{\NORM}[1]{|\!|\!| #1|\!|\!|}
\newtheorem{theorem}{Theorem}[section]
\newtheorem{Theorem}{Theorem}[section]
\newtheorem{corollary}[theorem]{Corollary}
\newtheorem{Corollary}[theorem]{Corollary}
\newtheorem{proposition}[theorem]{Proposition}
\newtheorem{Proposition}[theorem]{Proposition}
\newtheorem{Lemma}[theorem]{Lemma}
\newtheorem{lemma}[theorem]{Lemma}
\theoremstyle{definition}
\newtheorem{definition}{Definition}[section]
\newtheorem{Remark}[Theorem]{Remark}
\def\theequation{\thesection.\arabic{equation}}
\numberwithin{equation}{section}
\definecolor{mygray}{rgb}{.6,.6,.6}
\definecolor{myblue}{rgb}{9, 0, 1}
\definecolor{colorforkeys}{rgb}{1.0,0.0,0.0}
\newlength\mytemplen
\newsavebox\mytempbox
\makeatletter
\newcommand\mybluebox{%
    \@ifnextchar[%]
       {\@mybluebox}%
       {\@mybluebox[0pt]}}
\def\@mybluebox[#1]{%
    \@ifnextchar[%]
       {\@@mybluebox[#1]}%
       {\@@mybluebox[#1][0pt]}}
\def\@@mybluebox[#1][#2]#3{
    \sbox\mytempbox{#3}%
    \mytemplen\ht\mytempbox
    \advance\mytemplen #1\relax
    \ht\mytempbox\mytemplen
    \mytemplen\dp\mytempbox
    \advance\mytemplen #2\relax
    \dp\mytempbox\mytemplen
    \colorbox{myblue}{\hspace{1em}\usebox{\mytempbox}\hspace{1em}}}
\makeatother
%Igor's macros  varmac
\def\PPs{\mathcal{P}_{\text{s}}}
\def\JJ{J}
\def\TT{\tilde{T}}
\def\bold{\colu {\bf OLD:}}
\def\eold{\colb {}}
\def\Omegaatm{\Omega_{\text{atm}}}
\def\Omegaoce{\Omega_{\text{oce}}}
\def\Omfone{\Omega_{\text{f1}}}
\def\Omftwo{\Omega_{\text{f2}}}
\def\Omf{\Omega_{\text{f}}}
\def\Ome{\Omega_{\text{e}}}
\def\Gac{\Gamma_{\text{c}}}
\def\Gaf{\Gamma_{\text{f}}}
\def\Gat{\Gamma_{\text{t}}}
\def\Gab{\Gamma_{\text{b}}}

\def\uatm{u_{\text{atm}}}
\def\vatm{v_{\text{atm}}}
\def\watm{w_{\text{atm}}}
\def\patm{p_{\text{atm}}}
\def\uoce{u_{\text{oce}}}
\def\voce{v_{\text{oce}}}
\def\vatm{v_{\text{atm}}}
\def\woce{w_{\text{oce}}}
\def\watm{w_{\text{atm}}}
\def\poce{p_{\text{oce}}}
\def\patm{p_{\text{atm}}}

\def\nablah{\nabla_{\text{H}}}
\def\nablaa{\nabla_{a}}
\def\nablaah{\nabla_{a,\text{H}}}
\def\diva{\div_{a}}
\def\divah{\div_{a,\text{H}}}
\def\divh{\div_{\text{H}}}
\def\rr{r}
\def\weaks{\text{\,\,\,\,\,\,weakly-* in }}
\def\inn{\text{\,\,\,\,\,\,in }}
\def\cof{\mathop{\rm cof\,}\nolimits}
\def\Dn{\frac{\partial}{\partial N}}
\def\Dnn#1{\frac{\partial #1}{\partial N}}
\def\tdb{\tilde{b}}
\def\tda{b}
\def\qqq{u}
\def\lat{\Delta_2}
\def\biglinem{\vskip0.5truecm\par==========================\par\vskip0.5truecm}
\def\inon#1{\hbox{\ \ \ \ \ \ \ }\hbox{#1}}                %in or on
\def\onon#1{\inon{on~$#1$}}
\def\inin#1{\inon{in~$#1$}}
\def\FF{F}
\def\andand{\text{\indeq and\indeq}}
\def\ww{w(y)}
\def\ll{{\color{red}\ell}}
\def\ee{\epsilon_0}   
\def\nnewpage{ }
\def\sgn{\mathop{\rm sgn\,}\nolimits}    
\def\Tr{\mathop{\rm Tr}\nolimits}    
\def\div{\mathop{\rm div}\nolimits}
\def\curl{\mathop{\rm curl}\nolimits}
\def\dist{\mathop{\rm dist}\nolimits}  
\def\supp{\mathop{\rm supp}\nolimits}
\def\indeq{\quad{}}           
\def\period{.}                       
\def\semicolon{\,;}                  
\def\nts#1{{\cor #1\cob}}
\def\colr{\color{red}}
\def\colrr{\color{black}}
\def\colb{\color{black}}
\def\coly{\color{lightgray}}
\definecolor{colorgggg}{rgb}{0.1,0.5,0.3}
\definecolor{colorllll}{rgb}{0.0,0.7,0.0}
\definecolor{colorhhhh}{rgb}{0.3,0.75,0.4}
\definecolor{colorpppp}{rgb}{0.7,0.0,0.2}
\definecolor{coloroooo}{rgb}{0.45,0.0,0.0}
\definecolor{colorqqqq}{rgb}{0.1,0.7,0}
\def\colg{\color{colorgggg}}
\def\collg{\color{colorllll}}
\def\cole{\color{black}}
\def\coleo{\color{colorpppp}}
\def\colu{\color{blue}}
\def\colc{\color{colorhhhh}}
\def\colW{\colb}   %color for weight
\definecolor{coloraaaa}{rgb}{0.6,0.6,0.6}%%%out
\def\colw{\color{coloraaaa}}
\def\comma{ {\rm ,\quad{}} }            
\def\commaone{ {\rm ,\quad{}} }          
\def\nts#1{{\color{blue}\hbox{\bf ~#1~}}} 
\def\ntsf#1{\footnote{\color{colorgggg}\hbox{#1}}} 
\def\blackdot{{\color{red}{\hskip-.0truecm\rule[-1mm]{4mm}{4mm}\hskip.2truecm}}\hskip-.3truecm}
\def\bluedot{{\color{blue}{\hskip-.0truecm\rule[-1mm]{4mm}{4mm}\hskip.2truecm}}\hskip-.3truecm}
\def\purpledot{{\color{colorpppp}{\hskip-.0truecm\rule[-1mm]{4mm}{4mm}\hskip.2truecm}}\hskip-.3truecm}
\def\greendot{{\color{colorgggg}{\hskip-.0truecm\rule[-1mm]{4mm}{4mm}\hskip.2truecm}}\hskip-.3truecm}
\def\cyandot{{\color{cyan}{\hskip-.0truecm\rule[-1mm]{4mm}{4mm}\hskip.2truecm}}\hskip-.3truecm}
\def\reddot{{\color{red}{\hskip-.0truecm\rule[-1mm]{4mm}{4mm}\hskip.2truecm}}\hskip-.3truecm}
\def\tdot{{\color{green}{\hskip-.0truecm\rule[-.5mm]{3mm}{3mm}\hskip.2truecm}}\hskip-.1truecm}
\def\vert{\Vert}
\def\gdot{\greendot}
\def\bdot{\bluedot}
\def\pdot{\purpledot}
\def\ydot{\cyandot}
\def\rdot{\cyandot}
\def\fractext#1#2{{#1}/{#2}}
\def\ii{\hat\imath}
\def\fei#1{\textcolor{blue}{#1}}
\def\vlad#1{\textcolor{cyan}{#1}}
\def\igor#1{\text{ \bf {\textcolor{colorqqqq}IK:{#1}  }   }}
\def\igorf#1{\footnote{\text{{\textcolor{colorqqqq}{#1}}}}}
\newcommand{\p}{\partial}
\newcommand{\UE}{U^{\rm E}}
\newcommand{\PE}{P^{\rm E}}
\newcommand{\KP}{K_{\rm P}}
\newcommand{\uNS}{u^{\rm NS}}
\newcommand{\vNS}{v^{\rm NS}}
\newcommand{\pNS}{p^{\rm NS}}
\newcommand{\omegaNS}{\omega^{\rm NS}}
\newcommand{\uE}{u^{\rm E}}
\newcommand{\vE}{v^{\rm E}}
\newcommand{\pE}{p^{\rm E}}
\newcommand{\omegaE}{\omega^{\rm E}}
\newcommand{\ua}{u_{\rm   a}}
\newcommand{\va}{v_{\rm   a}}
\newcommand{\omegaa}{\omega_{\rm   a}}
\newcommand{\ue}{u_{\rm   e}}
\newcommand{\ve}{v_{\rm   e}}
\newcommand{\omegae}{\omega_{\rm e}}
\newcommand{\omegaeic}{\omega_{{\rm e}0}}
\newcommand{\ueic}{u_{{\rm   e}0}}
\newcommand{\veic}{v_{{\rm   e}0}}
\newcommand{\up}{u^{\rm P}}
\newcommand{\vp}{v^{\rm P}}
\newcommand{\tup}{{\tilde u}^{\rm P}}
\newcommand{\bvp}{{\bar v}^{\rm P}}
\newcommand{\omegap}{\omega^{\rm P}}
\newcommand{\oft}{\Omega_{f_1}}
\newcommand{\ofb}{\Omega_{f_2}}
\newcommand{\tomegap}{\tilde \omega^{\rm P}}
\renewcommand{\up}{u^{\rm P}}
\renewcommand{\vp}{v^{\rm P}}
\renewcommand{\omegap}{\Omega^{\rm P}}
\renewcommand{\tomegap}{\omega^{\rm P}}
\newcommand{\lot}{\text{l.o.t.}}
\colb

\begin{abstract}
We address the Euler immersion problem, a fluid-structure interaction problem in which an elastic body is immersed in an incompressible inviscid fluid governed by the Euler equations. We show that the system exhibits a loss of one derivative and formulate the problem in analytic function spaces. We then prove local well-posedness in analytic spaces under velocity-matching boundary conditions. Finally, by means of an example, we show that existence fails in analytic spaces when both velocity and stress-matching boundary conditions are prescribed.
\colb
\end{abstract}
\colb
\maketitle
\setcounter{tocdepth}{2} 
\tableofcontents

\section{Introduction}\label{sec01}
We consider a model of an elastic structure immersed in an inviscid
incompressible fluid.  The displacement $w$ of the elastic body is modeled
by the wave equation
\begin{equation}
    w_{tt}-\Delta w=0,
    \label{EQ150}
\end{equation}
while the velocity $u$ of the inviscid fluid is described by the Euler
equations
\begin{align}
  \begin{split}
    &
    u_t+(u\cdot\nabla)u+\nabla p=0,
    \\&
    \operatorname{div}u=0
  \end{split}
    \label{EQ2}
\end{align}
in $\Omega_f(t)$, where $\Omega_f(t)$ denotes the fluid domain at time~$t$.  The local existence and stability of the analogous system, with
appropriate conditions, when the Euler equations are replaced by the
Navier-Stokes equations, have been addressed in many references; see, among
many others,
\cite{
ALT,
AT1,
BGT,
BZ,
BoulakiaSchwindtTakahashi2012,
ChambolleDesjardinsEstebanGrandmont2005,
CoutandShkoller2005,CoutandShkoller2006,
DesjardinsEstebanGrandmontLeTallec2001,
GrandmontHillairetLequeurre2019,
IKLT2,
IgnatovaKukavicaLasieckaTuffaha2017,
KO1,
KO2,
Lengeler2014,
MC1}.
In the viscous setting, the parabolic character of the fluid equation provides
smoothing and dissipation which play a central role in the construction of
solutions.  By contrast, the inviscid problem considered here is substantially
more delicate, since the Euler equations do not provide smoothing and the
pressure is determined solely as a Lagrange multiplier enforcing
incompressibility and the boundary constraints.  Related inviscid
free-boundary and Euler-structure models have been studied, for instance, in
\cite{
AydinKukavicaTuffaha2025,
CoutandShkoller2007,
KukavicaTuffaha2024,
Wu1999}.
In particular,
\cite{KukavicaTuffaha2024} establishes local well-posedness for an
Euler-plate free-boundary model, while
\cite{AydinKukavicaTuffaha2025} develops a minimal-regularity theory for the
same class of inviscid flow-structure systems.

As shown in Remark~\ref{R01} below, a formal Sobolev energy approach, using
ALE variables, reveals a derivative-loss mechanism for the
fully coupled inviscid system with both velocity and stress matching.  The
leading pressure boundary term has the correct structure to be paired with
the wave energy.  However, after the integration by parts
in the pressure term
needed to
expose this cancellation, we are left with commutator terms involving the
time-dependent ALE coefficients and the pressure.  In particular, the
commutator associated with
\[
    b_{ki}Dv_i-D(b_{ki}v_i)
\]
forces the top pressure norm~$D\nabla q$.  Under the formal stress relation
$q=\partial_Nw$, the elliptic estimate for the pressure requires one more
boundary derivative of $w$ than is controlled by the Sobolev wave energy at
the same order.  Thus, from the point of view of the direct energy method, the
inviscid problem exhibits a genuine loss of one derivative.

This obstruction suggests that one should work in analytic spaces.  Analytic
and Gevrey regularity for the Euler equations and related inviscid models has
been studied extensively; see, for example,
\cite{
AlinhacMetivier1986,
BardosBenachour1977,
Benachour1979,
KukavicaVicol2009,
KukavicaVicol2011,
LeBail1986}.
A particularly relevant recent work is
\cite{KOS}, where analyticity is used to treat an inviscid inflow-outflow
problem.  The present paper is related to that work in spirit: in both
problems analyticity compensates for a boundary mechanism which is difficult
to handle in a fixed Sobolev scale.  The construction here, however, is
different.  In \cite{KOS}, analyticity is used to handle an inviscid
inflow-outflow formulation, whereas in the present fluid-elastic problem the
analyticity of the wave component determines the interface motion and hence
the ALE coefficients before the Euler pressure is solved.  This leads to a
new compatibility issue which has no analog in the standard fixed-boundary
Euler problem.

Indeed, as it turns out, the analytic approach does not simply recover the
fully coupled system with both velocity and stress matching.  Since the wave
equation is hyperbolic, analytic initial data for $\partial_t w$ can be
extended across the initial position of the interface.  The wave equation can
then be solved, for a short time, from its analytically extended Cauchy data.
This determines the elastic displacement and hence the ALE coefficients for
the fluid.  The pressure is then obtained from the elliptic equation associated
with the incompressibility constraint and the kinematic boundary condition.
In general, this pressure cannot also be prescribed to satisfy an independent
stress matching condition on the interface.  The mismatch example in
Section~\ref{sec07} shows that the pressure produced by the constrained Euler
problem may contain nontrivial boundary modes which cannot be removed by the
usual time-dependent normalization.  Thus, in order to obtain solvability in
the analytic framework, the system is naturally formulated with the velocity
matching condition alone.

The main purpose of this paper is to prove local existence and uniqueness for
analytic initial data for this kinematic Euler-wave coupling.  The analytic
norms used for the fluid variables are denoted by $X_\tau$ and~$Y_\tau$.
Here, $\tau$ is the radius of analyticity, $X_\tau$ is the basic analytic
norm, and $Y_\tau$ is the associated norm which measures one analytic
derivative loss; the precise definitions are given in~\eqref{EQ20}.  The
radius is chosen to decrease in time,
$
    \tau(t)=\tau_0-Mt
$.
When the analytic norm is differentiated in time, this produces the negative
contribution
$
    -M\|v\|_{Y_\tau}
$,
which is the main mechanism used to absorb the derivative-losing terms in the
transport and pressure estimates.

We now describe the main ingredients of the proof.  The elastic component is
first treated on a slightly enlarged strip.  Since $w(0)=0$, the analytic
extension of $\partial_t w(0)$ determines a short-time analytic solution of
the wave equation, and yields uniform bounds for
$
    \partial_t w(t)
$ and
$
    \nabla w(t)$.
The use of an enlarged strip is important: the radius available for the wave
component is chosen strictly larger than the radius used for the fluid
variables.
This condition can be achieved without loss of generality as we can always reduce the radius of analyticity for the initial velocity.
The separation of radii is used to compensate for the
loss caused by differentiating the coupling terms and the ALE coefficients.

The fluid equations are written in arbitrary Lagrangian-Eulerian
(ALE)
coordinates
on a fixed reference domain.  The ALE map $\eta$ is determined by a harmonic
extension of the elastic displacement.  We denote by
  \begin{align}
  \begin{split}
   &a=(\nabla\eta)^{-1},
   \\&
    J=\det\nabla\eta,
   \\&
    b=Ja   
  \end{split}
   \label{EQ114}
  \end{align}
the inverse deformation gradient, the Jacobian, and the cofactor-type matrix,
respectively; see \eqref{EQ09}--\eqref{EQ11} for the precise formulas in the
present geometry.  A key preliminary step is to show that these coefficients
remain close to their Euclidean values for short time.  In particular, we
prove estimates of the form
\[
    \|b-I\|_{X_\tau}+\|a-I\|_{X_\tau}
    \lesssim
    t\,\|\partial_t w(0)\|_{X_{\tau_1}},
\]
together with bounds for $\partial_t\psi$ and for reciprocal quantities
such as~$J^{-1}$.  The Piola identity for $b$ plays a central role in the
pressure equation and in the preservation of the incompressibility constraint.

The pressure estimate is obtained by a perturbative elliptic argument.  Rather
than treating the pressure equation as a fully variable-coefficient elliptic
problem, we rewrite it as
\[
    \Delta q=\partial_j f_j,
\]
where $f_j$ contains the transport terms, the time-dependent ALE
coefficients, and the perturbative error
\[
    (\delta_{jk}-b_{ji}a_{ki})\partial_kq .
\]
The boundary conditions for this elliptic problem are derived from the Euler
equation, the kinematic condition, and the fixed boundary condition.  Since
$I-ba^T$ is small for short time, the term containing $\nabla q$ can be
absorbed.  This perturbative treatment is related in spirit to the classical
estimates for the divergence equation, beginning with Bogovski{\u\i}'s construction
and its subsequent developments; see
\cite{Bogovskii1979,Bogovskii1980,Galdi2011}.  In the present analytic setting
it yields an estimate of the schematic form
\[
    \|\nabla q\|_{X_\tau}
    \lesssim
    (\|\partial_t w(0)\|_{X_{\tau_1}}+1)
    (\|v\|_{X_\tau}+\|v\|_{Y_\tau})(\|v\|_{X_\tau}+1)
    +\|\partial_t w(0)\|_{X_{\tau_1}} .
\]
The occurrence of the $Y_\tau$-norm is the analytic manifestation of the
derivative loss.  Choosing $M$ sufficiently large allows the negative term
$-M\|v\|_{Y_\tau}$ coming from the decreasing radius to absorb the
$Y_\tau$-contributions generated by the transport nonlinearity, the ALE
coefficients, and the pressure estimate.

Finally, the construction is carried out by an iteration in which the next
velocity and the next pressure are solved simultaneously.  This point is
essential.  Both the ALE incompressibility constraint and the kinematic
boundary condition contain the time-dependent coefficients~$b_{ji}$.  If all
terms except $\partial_t v^{(n+1)}$ were evaluated explicitly at the
$n$-th iterate, the constraints would not be preserved at the next step.
We therefore solve $(v^{(n+1)},q^{(n+1)})$ as a coupled linear constrained
system, with the pressure acting as the Lagrange multiplier enforcing the
constraint and the boundary condition.  After proving the solvability of each
linear step, we establish uniform analytic bounds and a contraction estimate
for the differences.  Passing to the limit in the integral formulation of the
iteration gives the desired analytic solution.

\section{The model}
\label{sec02}
We study a free-boundary fluid-structure system consisting of
an elastic body immersed in an inviscid fluid.
For simplicity of presentation, we consider the case when the boundary
is flat. Thus, the initial configuration is as follows.
The initial fluid domain is modeled by the union 
\[
\Omf:=\Omega_1\cup\Omega_2
=\{x:\ x'\in\mathbb{T}^2,\ 1<x_3<2\}\cup \{x:\ x'\in\mathbb{T}^2, -1<x_3<0\}
.
\]
At time $t\geq0$, the upper and lower fluids occupy time-dependent domains
$\Omfone(t)$ and $\Omftwo(t)$, respectively, while the intermediate layer represents the moving elastic domain~$\Ome(t)$. The fluid motion is governed by the incompressible Euler equations, whereas the displacement of the elastic layer is described by a wave equation.
The elastic solid is characterized by its displacement $w$, which solves the linear wave equation 
\begin{align}
    \label{EQ01}
    w_{tt}-\Delta w=0
\end{align}
in~$\Ome\times(0,\infty)$. Moreover, the initial elastic region is
\[
\Ome=\Ome(0)=\{x=(x_1,x_2,x_3): x':=(x_1,x_2)\in\mathbb{T}^2,\ x_3\in(0,1)\}.
\]
The displacement variable $w$ is defined relative to the reference elastic configuration: For each material point in the elastic body, $w$ measures its deviation from the initial position. Consequently, the elastic domain is represented on the fixed reference set $\Ome$, and this set does not depend on time. In particular, one has
\begin{align}
\label{EQ02}
    w|_{t=0}=0.
\end{align}
By contrast, the fluid is described with reference to the domain
\begin{equation}
\Omf:=\Omega_1\cup\Omega_2
=\{x:\ x'\in\mathbb{T}^2,\ 1<x_3<2\}\cup \{x:\ x'\in\mathbb{T}^2,\ -1<x_3<0\}.
   \llabel{EQ03}
     \end{equation}
We write $\Gaf=\Gamma_1\cup\Gamma_2$ for the outer fluid boundary, where
\begin{equation}
\Gamma_1:=\{x:\ x'\in\mathbb{T}^2,\ x_3=2\} \andand
\Gamma_2:=\{x:\ x'\in\mathbb{T}^2,\ x_3=-1\},
   \llabel{EQ04}
     \end{equation}
and we denote by $\Gac=\Gat\cup\Gab$ the interface between the fluid and the elastic body, with
\begin{equation}
\Gat:=\{x:\ x'\in\mathbb{T}^2,\ x_3=1\} \andand
\Gab:=\{x:\ x'\in\mathbb{T}^2,\ x_3=0\}
;
   \llabel{EQ05}
     \end{equation}
see Figure~1 in \cite{KO2} for a sketch.
In the fluid region, the velocity field $u$ satisfies the incompressible Euler equations on a moving domain. Since the fluid domain evolves in time, it is convenient to rewrite the system in the arbitrary Lagrangian-Eulerian (ALE) framework. To this end, we introduce the ALE flow map together with the corresponding displacement function. Since the construction of the ALE coordinates and the corresponding arguments are the same for $\Omega_1$ and $\Omega_2$, up to a reflection and a translation, it suffices to treat the case of~$\Omega_1$.
%The case of $\Omega_2$ can be handled in an entirely analogous manner.

Let $T>0$, to be specified below.
Denote by $\psi\colon\Omega\times[0,T]\to\mathbb{R}$ the harmonic extension of $1+w$ from the top boundary into the reference domain $\Omega=\Omega(0)$, i.e.,
\begin{align}
  \begin{cases}
\Delta \psi =0 & \text{in } \Omega_1,\\
\psi(x_1,x_2,1,t)=1+w(x_1,x_2,t) & \text{on } \Gat\times[0,T],\\
\psi(x_1,x_2,2,t)=2 & \text{on } \Gamma_1\times[0,T].
\end{cases}  
\label{EQ06}
\end{align}
The associated ALE map $\eta\colon\Omega_1\times[0,T]\to\Omega_1(t)$ is defined by
\begin{align}
\eta(x_1,x_2,x_3,t)=\bigl(x_1,x_2,\psi(x_1,x_2,x_3,t)\bigr).
\label{EQ07}
\end{align}
Accordingly,
\begin{align}
\nabla\eta=
\begin{pmatrix}
1&0&0\\
0&1&0\\
\partial_1\psi&\partial_2\psi&\partial_3\psi
\end{pmatrix},
\label{EQ08}
\end{align}
and thus
\begin{align}
a:=(\nabla\eta)^{-1}=\frac1J\,b=
\begin{pmatrix}
1&0&0\\
0&1&0\\
-\partial_1\psi/\partial_3\psi&-\partial_2\psi/\partial_3\psi&1/\partial_3\psi
\end{pmatrix}
\label{EQ09}
\end{align}
with
\begin{align}
J=\partial_3\psi
\label{EQ10}
\end{align}
and
\begin{align}
b=
\begin{pmatrix}
\partial_3\psi&0&0\\
0&\partial_3\psi&0\\
-\partial_1\psi&-\partial_2\psi&1
\end{pmatrix}.
\label{EQ11}
\end{align}
Note that $b$ satisfies the Piola identity
\begin{align}
\partial_i b_{ij}=0
\comma
 j=1,2,3.
\label{EQ12}
\end{align}
Unless explicitly stated otherwise, repeated indices are summed over $1,2,3$. We also introduce the ALE unknowns
\begin{align}
v(x,t)=u(\eta(x,t),t)
\andand
q(x,t)=p(\eta(x,t),t).
\label{EQ13}
\end{align}
In the moving fluid domain, the velocity and pressure satisfy the incompressible Euler equations. Passing to the ALE coordinates introduced above, we rewrite the system on the fixed reference domain $\Omega$ as
\begin{align}
\begin{split}
    &\partial_t v_i
+v_1a_{j1}\partial_j v_i
+v_2a_{j2}\partial_j v_i
+\frac{1}{\partial_3\psi}(v_3-\partial_t\psi)\partial_3 v_i
+a_{ki}\partial_k q=0,
\\
&a_{ki}\partial_k v_i=0
%\quad \text{in } \Omega_1\times[0,T],
\end{split}
\label{EQ14}
\end{align}
in $\Omega_1\times[0,T]$,
where we used the relation
\begin{equation}
a_{j3}\partial_j v_i=\frac{1}{\partial_3\psi}\partial_3 v_i.
   \llabel{EQ15}
     \end{equation}
The initial condition is
\begin{align}
v(\cdot,0)=v_0=(v_{01},v_{02},v_{03}).
\label{EQ16}
\end{align}
On the far-top and far-bottom fluid boundary $\Gamma_1$ and $\Gamma_2$, we impose
\begin{align}
v_3=0.
\label{EQ17}
\end{align}
Finally, assuming the velocity matching condition at the fluid-structure interface, the kinematic boundary condition on $\Gat$ takes the form
\begin{align}
b_{3i}v_i=\partial_tw.
\label{EQ18}
\end{align}
For $\alpha=(\alpha_1,\alpha_2,\alpha_3)\in\mathbb N_0^3$, we set
\begin{equation}
|\alpha|=\alpha_1+\alpha_2+\alpha_3,\qquad
\partial^\alpha=\partial_1^{\alpha_1}\partial_2^{\alpha_2}\partial_3^{\alpha_3},\andand
\epsilon^\alpha=\epsilon_1^{\alpha_1}\epsilon_2^{\alpha_2}\epsilon_3^{\alpha_3};
   \llabel{EQ19}
     \end{equation}
The proof shows that we may simply fix
$\epsilon=(\epsilon_1,\epsilon_2,\epsilon_3)=(1,1,\frac{1}{2})$.
%Since this paper is concerned with analytic function spaces, we introduce the following norm in order to quantify analyticity more precisely,
Introduce the analytic norms
\begin{align}
    \begin{split}
    \label{EQ20}
        &\|u\|_{X_\tau}:=\sum_{m=0}^\infty\sum_{|\alpha|=m}\epsilon^\alpha 
        \tau^{(m-3)_+}M_m\|\partial^\alpha u\|, \quad \text{where }M_m=\frac{(m+1)^3}{m!},
        \\
        &
        \|u\|_{Y_\tau}:=\sum_{m=4}^\infty\sum_{|\alpha|=m}\epsilon^\alpha 
        \tau^{m-4}mM_m\|\partial^\alpha u\|,
    \end{split}
\end{align}
where we used the notation
\begin{equation}
\|\cdot\|:=\|\cdot\|_{L^2(\Omega)}
   \llabel{EQ21} 
     \end{equation}
for the $L^2$~norm
with the domain that should be clear from the context.
Note that functions for which either of the two norms is finite
allow a unique analytic extension on larger domains comparable to the
analyticity radius.
Here, $\tau(t)$ denotes a time-dependent radius of analyticity. The choice of $\tau(t)$ is the key degree of freedom in the analytic framework, and a suitable choice of $\tau(t)$ yields the analytic dissipation. This mechanism allows us to compensate for the loss of derivatives; see Section~\ref{sec05} and the discussion below.

Throughout the paper, $C$ denotes a positive constant whose value may vary from line to line. We write
$A\lec B$
to mean $A \leq C B$ for a constant $C$, and
$A\sim B$
to mean $A \lesssim B$ and $B \lesssim A$.
We first discuss the compatibility condition imposed on the average of the
elastic velocity. Assume that $\partial_tw(\cdot,0)$ admits an analytic
extension to the enlarged strip
\[
\mathbb T^2\times(-\delta,1+\delta),
\]
for some $0<\delta<\frac{1}{2}$.
Since $w(\cdot,0)=0$, the Cauchy problem for the wave equation with initial
data
\[
w(\cdot,0)=0
\andand
\partial_tw(\cdot,0)
\]
has a short-time analytic solution on the enlarged strip. By uniqueness, this
solution agrees with the original elastic displacement on
$\mathbb T^2\times(0,1)$. We shall use this extension without changing the
notation.
Define
\begin{equation}
F_0(x_3):=
\int_{\mathbb T^2}
\partial_tw(x_1,x_2,x_3,0)\,dx_1dx_2
\comma
x_3\in(-\delta,1+\delta).
   \llabel{EQ22}
\end{equation}
For
\[
\overline w(x_3,t):=
\int_{\mathbb T^2}w(x_1,x_2,x_3,t)\,dx_1dx_2,
\]
we have
\[
\partial_{tt}\overline w-\partial_{33}\overline w=0
\comma
\overline w(\cdot,0)=0,
\quad
\text{and}
\quad
\partial_t\overline w(\cdot,0)=F_0 .
\]
Hence, the one-dimensional d'Alembert formula gives
\begin{equation}
\partial_t\overline w(x_3,t)
=
\frac12\bigl(F_0(x_3+t)+F_0(x_3-t)\bigr)
   \llabel{EQ23}
\end{equation}
as long as $x_3\pm t\in(-\delta,1+\delta)$.
Thus the following condition is necessary and sufficient for the averages of
the prescribed elastic normal velocity at the two interfaces to vanish for short
time:
\begin{align}
F_0(-x_3)=-F_0(x_3)\comma 0<x_3<\delta,
\andand
F_0(1+s)=-F_0(1-s)\comma 0<s<\delta .
   \label{EQ24}
\end{align}
In particular,
\begin{equation}
\int_{\Gat}\partial_tw\,dx_1dx_2=0,
   \llabel{EQ25}
\end{equation}
which is the flux compatibility condition associated with
$a_{ki}\partial_kv_i=0$, $v_3=0$ on $\Gamma_1$, and
$b_{3i}v_i=\partial_tw$ on~$\Gat$.
We assume throughout that the initial data satisfy
\begin{align}
    \partial_i v_{0i}=0
    \inin{ \Omega_1}
   \llabel{EQ26}
\end{align}
and
  \begin{align}
  \begin{split}
   &
    v_{03}=0,
    \onon{\Gamma_1},
    \\&
    v_{03}=\partial_tw(\cdot,0)
    \onon{\Gat}
    .
  \end{split}
   \label{EQ27}
  \end{align}
Since $w(\cdot,0)=0$, the ALE map is the identity at $t=0$. Equivalently,
\begin{align}
  \begin{split}
    &
    a_{ki}(\cdot,0)\partial_k v_{0i}=0
    \inin{\Omega_1}
    ,
    \\&
    v_{03}=0
    \inin{\Gamma_1},
    \\&
    b_{3i}(\cdot,0) v_{0i}=\partial_tw(\cdot,0)
  \onon{\Gat}
  \end{split}
   \label{EQ28}
\end{align}
on~$\Gat$.

The following is our main theorem.

\begin{Theorem}[Local well-posedness of \eqref{EQ14} in analytic spaces]
\label{T01}
Assume that $v_0\in X_{\tau_0}\cap Y_{\tau_0}$, for some $\tau_0>0$.
Assume also that $w$ satisfies \eqref{EQ02} and that
\[
\partial_tw(\cdot,0)\in
X_{\tau_1}\bigl(\mathbb T^2\times(-\delta,1+\delta)\bigr)
,
\]
for some $0<\delta<1/2$ and $\tau_1>\tau_0$. Assume that the
compatibility conditions \eqref{EQ24} and \eqref{EQ28} hold.
Choose $M\geq1$ sufficiently large, depending only on
\begin{equation}
A:=\|\partial_tw(\cdot,0)\|_{X_{\tau_1}(\mathbb T^2\times(-\delta,1+\delta))}
   \llabel{EQ112}
\end{equation}
and
\begin{equation}
B:=4\bigl(\|v_0\|_{X_{\tau_0}}+\|v_0\|_{Y_{\tau_0}}+1\bigr)
,
   \llabel{EQ113}
\end{equation}
and set
\begin{equation}
\tau(t):=\tau_0-Mt
.
   \llabel{EQ29}
\end{equation}
Then there exists $T_0>0$, depending only on $A$, $B$, $M$, $\tau_0$,
and $\delta$, such that the system \eqref{EQ14}, with boundary conditions
\eqref{EQ17} and \eqref{EQ18}, has a unique solution $v$ on $[0,T_0]$.
Moreover,
\begin{equation}
v\in C\bigl([0,T_0];X_{\tau(t)}\bigr)
\cap
L^1\bigl([0,T_0];Y_{\tau(t)}\bigr)
   \llabel{EQ30}
\end{equation}
and
\begin{equation}
\sup_{0\leq t\leq T_0}\|v(t)\|_{X_{\tau(t)}}
+
M\int_0^{T_0}\|v(t)\|_{Y_{\tau(t)}}\,dt
\leq B .
   \label{EQ31}
\end{equation}
\end{Theorem}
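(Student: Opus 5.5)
The plan follows the scheme outlined in the introduction, in four stages.

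\emph{Stage 1: the elastic part and the coefficients.} Since $w(\cdot,0)=0$ and $\partial_tw(\cdot,0)\in X_{\tau_1}$ on the enlarged strip $\mathbb T^2\times(-\delta,1+\delta)$, I will solve the wave equation by the Cauchy--Kovalevskaya/d'Alembert representation (Fourier in $x'$, explicit $\cos$/$\sin$ propagators). This gives, for $t\le T_0<\delta$,
\begin{equation*}
\|w(t)\|_{X_{\tau_1'}}\lesssim tA
\andand
\|\partial_tw(t)\|_{X_{\tau_1'}}\lesssim A
\end{equation*}
on $\Gat$, for some $\tau_0<\tau_1'<\tau_1$. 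Here \eqref{EQ24} guarantees zero mean flux on $\Gat$. The harmonic extension \eqref{EQ06} is explicit in Fourier modes ($\sinh$ ratios in $x_3$), and it preserves the analytic norms of the boundary data. Hence
\begin{equation*}
\|\psi-x_3\|_{X_{\tau_1'}}\lesssim tA
\andand
\|\partial_t\psi\|_{X_{\tau_1'}}\lesssim A.
\end{equation*}
Using the algebra property of $X_\tau$ (the weights $M_m=(m+1)^3/m!$ are designed for a Leibniz estimate $\|fg\|_{X_\tau}\lesssim\|f\|_{X_\tau}\|g\|_{X_\tau}$) and a Neumann series for $1/\partial_3\psi$, I get
\begin{equation*}
\|a-I\|_{X_\tau}+\|b-I\|_{X_\tau}+\|J^{-1}-1\|_{X_\tau}\lesssim tA.
\end{equation*}
The gap $\tau_1'>\tau(t)$ lets me bound derivatives of the coefficients, for example $\|\nabla b\|_{X_\tau}$, without a $Y_\tau$ loss.

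\emph{Stage 2: a priori analytic estimate.} I differentiate $\|v\|_{X_{\tau(t)}}$ in time; the identity $\dot\tau=-M$ produces the term $-cM\|v\|_{Y_\tau}$ (up to the low modes $m\le3$). The transport terms $v_ka_{jk}\partial_jv$ and $(v_3-\partial_t\psi)J^{-1}\partial_3v$ are estimated by the standard Leibniz splitting: the top-order derivative falls either on $\partial v$ (giving $\|v\|_{X}\|v\|_Y$-type bounds) or on lower factors. The key point is that the normal velocity $J^{-1}(b_{3i}v_i-\partial_t\psi)$ vanishes on $\Gat$ by \eqref{EQ18} (and on $\Gamma_1$ by \eqref{EQ17}). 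This is what makes the boundary contributions of $\partial_3$-derivatives controllable; the weight $\epsilon_3=1/2$ handles the anisotropy. For the pressure I use the reformulation $\Delta q=\partial_jf_j$ described in the introduction. The Neumann data come from dotting the Euler equation with $b_{3\cdot}$ on $\Gat$ and using $\partial_t(b_{3i}v_i)=\partial_{tt}w=\Delta w$, together with $v_3=0$ on $\Gamma_1$. An analytic elliptic estimate on the flat strip (again by Fourier in $x'$), with the error term $(\delta_{jk}-b_{ji}a_{ki})\partial_kq$ absorbed since it is $O(tA)$, gives the schematic bound stated in the introduction. Combining these yields
\begin{equation*}
\frac{d}{dt}\|v\|_{X_\tau}+(M-C(A,B))\|v\|_{Y_\tau}\le C(A,B).
\end{equation*}
Choosing $M\ge 2C(A,B)$ and $T_0$ small then gives \eqref{EQ31}, as long as $\|v\|_X\le B$ on the interval (a bootstrap).

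\emph{Stage 3: construction and uniqueness.} I iterate by solving for $(v^{(n+1)},q^{(n+1)})$ jointly. The coefficients and the transport velocity are frozen at $v^{(n)}$, but the constraint $a_{ki}\partial_kv^{(n+1)}_i=0$ and the boundary condition \eqref{EQ18} are imposed exactly, with $q^{(n+1)}$ as the Lagrange multiplier. Solvability of each step reduces to the same elliptic problem, and the initial compatibility \eqref{EQ28} holds. The Stage 2 estimates apply uniformly in $n$. For the differences $v^{(n+1)}-v^{(n)}$ I redo the estimate in a slightly weaker norm (or in the same norm with the $M$-absorption), obtaining a contraction for $T_0$ small. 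Passing to the limit in the integral form gives the solution, and applying the difference estimate to two solutions gives uniqueness. Continuity in $X_{\tau(t)}$ follows from the integral formulation.

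\emph{Main obstacle.} The step I expect to be hardest is the pressure estimate with its boundary terms. One must verify that the Neumann data involving $\partial_t b$ and $\Delta w$ cost no more than one analytic derivative, so that they are absorbed by $M\|v\|_Y$. The $\nabla q$ error terms also need the full $Y_\tau$ smallness, and this must hold uniformly in the iteration, where the constraint is preserved only because the step is solved jointly. If the Fourier-based elliptic estimate in $X_\tau$ is insufficient, I would first try estimating tangential derivatives $\partial'^\alpha q$ by energy methods and recovering $\partial_3$ derivatives from the equation.
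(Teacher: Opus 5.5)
Your proposal is correct and follows essentially the same route as the paper. The shared steps are: short-time analyticity of the wave solution obtained from the enlarged strip; $O(tA)$ bounds on $a-I$ and $b-I$ via the harmonic extension, the product estimate and a Neumann series; the pressure equation $\Delta q=\partial_jf_j$, with Neumann data from contracting the Euler equation with $b_{3i}$ and the $(I-ba^T)\nabla q$ term absorbed; absorption of the $Y_\tau$ loss by the shrinking radius $\tau(t)=\tau_0-Mt$; and a jointly solved $(v^{(n+1)},q^{(n+1)})$ iteration with contraction in the same norm.

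The differences are only technical. The paper treats the wave equation by energy estimates on the shrinking domain of dependence $\mathbb T^2\times(-\delta+t,1+\delta-t)$, which avoids any loss of radius and any need for $x_3$-propagators on a finite strip. It handles the elliptic problems by tangential energy estimates plus the $\epsilon_3=1/2$ recursion for normal derivatives (your stated fallback) rather than Fourier in $x'$. It never integrates by parts in the transport term, so the vanishing of the normal relative velocity on $\Gat$ that you invoke is not needed.
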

\begin{Remark}
The estimate \eqref{EQ31}
is the basic a priori estimate for the proof.
Note that 
the
decrease of the analytic radius produces the 
$L_t^1Y_{\tau(t)}$ control needed
to absorb the derivative loss in the nonlinear terms.
\end{Remark}

%\begin{Remark}[Analysis of a possible Sobolev approach]
\begin{Remark}[A formal Sobolev obstruction]\label{R01}
We explain why the analytic framework is natural for the present problem.  The
following discussion is formal and concerns the direct Sobolev energy method
for the fully coupled problem in which we also try to impose the stress
matching condition $q=\partial_N w$ on the fluid-structure interface.  This formal computation is used only to identify the derivative-loss mechanism motivating the analytic framework.  The
purpose of the discussion is to identify the derivative-loss mechanism which
appears in the Sobolev energy estimation.
Suppose that one tries to close a Sobolev energy at order $s$, and let
\[
    D=\bar\partial^m\partial_t^n
   \comma
   \text{where~}
    m+n=s,
\]
with $\bar\partial$ denoting tangential differentiation.  In the differentiated
fluid equation, the pressure contribution contains
\[
    \int_{\Omega_1}D(b_{ki}\partial_kq)\,Dv_i .
\]
This term cannot be estimated directly as such an estimate would require
the top norm $D\nabla q$, while $q$ is a Lagrange multiplier rather than an
energy variable.  Thus one is forced to integrate the principal pressure part
by parts in order to use the ALE incompressibility constraint.
Adding and subtracting the principal part gives
\[
\begin{aligned}
\int_{\Omega_1}D(b_{ki}\partial_kq)\,Dv_i
&=
\int_{\Omega_1}D\partial_kq\,D(b_{ki}v_i)
+\int_{\Omega_1}
\bigl(D(b_{ki}\partial_kq)-b_{ki}D\partial_kq\bigr)Dv_i
\\&\indeq
+\int_{\Omega_1}
\bigl(b_{ki}Dv_i-D(b_{ki}v_i)\bigr)D\partial_kq .
\end{aligned}
\]
Since $\partial_k(b_{ki}v_i)=0$, the first term reduces to a boundary term
\[
    \int_{\Omega_1}D\partial_kq\,D(b_{ki}v_i)
    =
    \int_{\partial\Omega_1}Dq\,D(b_{ki}v_i)N_k .
\]
On the fluid-structure interface this term has the form
\[
    \int_{\Gat}Dq\,D(b_{3i}v_i).
\]
At the level of the formal Sobolev energy for the fully coupled system, one
would use the kinematic condition $b_{3i}v_i=\partial_t w$ together with the
stress matching condition $q=\partial_N w$.  The boundary term then becomes
\[
    \int_{\Gat}D\partial_Nw\,D\partial_t w .
\]
This has the correct structure to be paired with the corresponding boundary
term in the differentiated wave energy, after integrations by parts in time in
the mixed estimates.  Thus the leading pressure boundary contribution is not
the main obstruction.

The difficulty lies in the commutators left after this integration by parts.
The first commutator is
\[
    \int_{\Omega_1}
    \bigl(D(b_{ki}\partial_kq)-b_{ki}D\partial_kq\bigr)Dv_i .
\]
This is of the form $[D,b]\nabla q$, and standard Sobolev commutator estimates
give schematically
\[
    \|[D,b]\nabla q\|_{L^2}
    \lesssim
    \|\nabla b\|_{L^\infty}\|\nabla q\|_{H^{s-1}}
    +
    \|b\|_{H^s}\|\nabla q\|_{L^\infty}.
\]
Thus this term only requires $\nabla q$ one order below the top level, and it
may be compatible with the elliptic gain for the pressure.
The second commutator has a different structure:
\[
    \int_{\Omega_1}
    \bigl(b_{ki}Dv_i-D(b_{ki}v_i)\bigr)D\partial_kq .
\]
Indeed,
\[
    b_{ki}Dv_i-D(b_{ki}v_i)
    =
    -(Db_{ki})v_i+\text{lower-order terms}.
\]
Hence, this commutator contains the top-order contribution
\[
    -\int_{\Omega_1}(Db_{ki})v_i\,D\partial_kq .
\]
A direct H\"older-Sobolev estimate yields
\[
\left|
\int_{\Omega_1}(Db_{ki})v_i\,D\partial_kq
\right|
\lesssim
\|v\|_{L^\infty}\|Db\|_{L^2}\|D\nabla q\|_{L^2}.
\]
Therefore, this term forces the top pressure norm
\[
    \|D\nabla q\|_{L^2}\sim \|\nabla q\|_{H^s}.
\]
In the formal Sobolev setting with the stress condition $q=\partial_N w$ on
$\Gat$, an elliptic estimate for $\|\nabla q\|_{H^s(\Omega_1)}$ involves
the boundary norm
\[
    \|q\|_{H^{s+1/2}(\Gat)}
    =
    \|\partial_Nw\|_{H^{s+1/2}(\Gat)} .
\]
The wave energy at order $s$ controls
\[
    \|\partial_t w\|_{H^s(\Omega_e)}
    +
    \|\nabla w\|_{H^s(\Omega_e)},
\]
and thus also $w$ at the level~$H^{s+1}(\Omega_e)$.  This is
insufficient to control
$\|\partial_Nw\|_{H^{s+1/2}(\Gat)}$, which requires one additional spatial
derivative of~$w$.  Thus the pressure commutator, together with the stress
boundary condition in the elliptic estimate for $q$, leads to a
one-derivative loss in a fixed Sobolev scale.
In the analytic framework, the same loss is measured by the $Y_\tau$-norm.
The pressure estimate has the schematic form
\[
    \|\nabla q\|_{X_\tau}\lec (\|\partial_t w(0)\|_{X_{\tau_1}}+1)
    (\|v\|_{X_\tau}+\|v\|_{Y_\tau})(\|v\|_{X_\tau}+1)+\|\partial_t w(0)\|_{X_{\tau_1}}.
\]
Since the analytic radius is decreasing, $\tau(t)=\tau_0-Mt$, differentiating
the analytic norm produces the negative term
\[
    -M\|v\|_{Y_\tau}.
\]
Choosing $M$ sufficiently large allows this term to absorb the
$Y_\tau$-contribution generated by the pressure and transport estimates.
This is the mechanism used below.

The mismatch example in Section~\ref{sec07} complements this observation.  Once
the analytic wave displacement fixes the ALE coefficients, the Euler pressure
is determined by the incompressibility constraint and the kinematic condition.
Its boundary trace is generally incompatible with an additional stress matching
condition, which is why the analytic problem studied here is formulated with
the kinematic condition alone.
\end{Remark}

We now turn to the proof of Theorem~\ref{T01}.

\section{Preliminary results}
\label{sec03}
In this section, we prove finite-time analyticity for the wave equation and derive elliptic estimates for the ALE coordinates, which will play a key role in the subsequent analysis.

\begin{Lemma}[Short-time analyticity for the wave equation]
\label{L01}
    Assume that $\partial_tw(\cdot,0)\in X_{\tau_1}(\mathbb{T}^2\times(-\delta,1+\delta))$.
Then $\partial_tw(\cdot,t),\nabla w(\cdot,t)\in X_{\tau_1}(\mathbb{T}^2\times(0,1))$ for $0\leq t\leq \delta$.
\end{Lemma}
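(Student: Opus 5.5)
The plan is to avoid any power series in time and instead combine finite speed of propagation with the fact that the constant-coefficient wave operator commutes with every derivative $\partial^\alpha$, including $\partial_3$. Set $g=\partial_tw(\cdot,0)$ on $\mathbb T^2\times(-\delta,1+\delta)$.

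First I construct the solution. Extend $g$ smoothly (not analytically) to all of $\mathbb T^2\times\RR$ by multiplying by a cutoff equal to $1$ on $[-\delta',1+\delta']$ for any $\delta'<\delta$. I then solve the Cauchy problem
\[
w_{tt}-\Delta w=0,\qquad w(\cdot,0)=0,\qquad w_t(\cdot,0)=g
\]
on $\mathbb T^2\times\RR$, for instance by Fourier series in $x'$ and the Fourier transform in $x_3$. By finite speed of propagation, the restriction of $w$ to the truncated cone
\[
K=\{(x,s):\ -\delta'+s<x_3<1+\delta'-s,\ 0\le s\le t\}
\]
depends only on $g$ on $(-\delta',1+\delta')$. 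So this $w$ is the extension used in the paper, and it agrees with the original displacement on $\mathbb T^2\times(0,1)$ by uniqueness. Letting $\delta'\uparrow\delta$ covers $0\le t<\delta$, and the endpoint $t=\delta$ follows from the bound below by continuity.

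The main step is a local energy inequality on $K$. For each multi-index $\alpha$, the function $\partial^\alpha w$ solves the same wave equation with data $(0,\partial^\alpha g)$. Multiply by $\partial_t\partial^\alpha w$ and integrate over $K$. Because the lateral faces of $K$ are spacelike (they have slope $1$), the boundary flux through them has a sign. Since $0\subset(-\delta'+t,1+\delta'-t)$ contains $(0,1)$ for $t\le\delta'$, this gives
\[
\|\partial^\alpha\partial_tw(t)\|_{L^2(\mathbb T^2\times(0,1))}^2+\|\partial^\alpha\nabla w(t)\|_{L^2(\mathbb T^2\times(0,1))}^2
\le\|\partial^\alpha g\|_{L^2(\mathbb T^2\times(-\delta,1+\delta))}^2 .
\]
Here I used $\nabla\partial^\alpha w(0)=0$, which holds because $w(0)=0$. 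No boundary conditions in $x_3$ enter, so $\partial_3$-derivatives are treated exactly like tangential ones. This is where the enlarged strip is essential.

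Finally, I multiply by the weights $\epsilon^\alpha\tau_1^{(m-3)_+}M_m$ from \eqref{EQ20} and sum over $\alpha$. This gives
\[
\|\partial_tw(t)\|_{X_{\tau_1}(\mathbb T^2\times(0,1))}+\|\nabla w(t)\|_{X_{\tau_1}(\mathbb T^2\times(0,1))}\lesssim\|g\|_{X_{\tau_1}(\mathbb T^2\times(-\delta,1+\delta))},
\]
where each component of $\nabla w$ is estimated separately. This holds uniformly for $0\le t\le\delta$, so no loss of radius occurs.

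The step I expect to need the most care is justifying the cone energy inequality for every $\alpha$. It requires enough regularity of $\partial^\alpha w$ to integrate by parts on $K$, which the smooth construction supplies. It also requires checking the sign of the flux on the slanted faces, using $|\nabla w\cdot n_x|\le|\nabla w|$ together with the standard pointwise inequality $\tfrac12(w_t^2+|\nabla w|^2)\ge |w_t\,\nabla w\cdot n_x|$.
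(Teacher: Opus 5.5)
Your proposal is correct and is essentially the paper's argument. The paper applies $\partial^\alpha$ to the wave equation and uses the local energy $G_\alpha(t)$ on the shrinking slab $-\delta+t<x_3<1+\delta-t$, which is the differential form of your cone estimate: it gets $G_\alpha'\le 0$ by completing squares at the two moving boundaries, uses $w(0)=0$ to obtain $G_\alpha(0)=\|\partial^\alpha\partial_tw(0)\|^2$, and sums with the weights of \eqref{EQ20}. Your smooth-cutoff construction of the solution, which the paper takes for granted, is a harmless addition; the only slip is that faces of slope $1$ are characteristic (null), not spacelike, which is exactly the borderline case where your AM--GM bound still gives a flux of the right sign.
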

\begin{proof}[Proof of Lemma~\ref{L01}]
We define
    \begin{align}
    \llabel{EQ32}
        G_\alpha(t)=\int_{\mathbb{T}^2}\int_{-\delta+t}^{1+\delta-t}
    \Bigl(
    (\partial^\alpha\partial_t w)^2+|\partial^\alpha\nabla w|^2\, dx_3
    \Bigr)
    \,dx',
    \end{align}
    where
    \begin{equation}
      |\partial^\alpha\nabla w|^2:=\sum_{i=1}^3(\partial^\alpha\partial_iw)^2
   \llabel{EQ33}
     \end{equation}
for any $\alpha\in\mathbb{N}_0^3$.
A direct differentiation in time yields
    \begin{align}
    \begin{split}
        \label{EQ34}
        G'_\alpha(t)&=2\int_{\mathbb{T}^2}
	\int_{-\delta+t}^{1+\delta-t}\partial^\alpha\partial_t w
        \partial^\alpha\partial_{tt} w
        +\partial^\alpha\nabla w\cdot\partial^\alpha\nabla\partial_t w
         \, dx_3\,dx'
        \\&\indeq
        -\int_{\mathbb{T}^2}
       \Bigl(
        (\partial^\alpha\partial_t w)^2(x',-\delta+t,t)
        +(\partial^\alpha\nabla w)^2(x',-\delta+t,t)\,dx'
       \Bigr)
        \\&\indeq
        -\int_{\mathbb{T}^2}
       \Bigl(
        (\partial^\alpha\partial_t w)^2(x',1+\delta-t,t)
        +(\partial^\alpha\nabla w)^2(x',1+\delta-t,t)\,dx'
	\Bigr)
        .
        \end{split}
    \end{align}
For convenience, we define
    \begin{equation}
        \tilde G_\alpha(x_3,t)
	=\int_{\mathbb{T}^2}
        \Bigl(
	(\partial_t\partial^\alpha w)^2(x',x_3,t)
        +(\partial^\alpha\nabla w)^2(x',x_3,t)
        \Bigr)
       \,dx'
    .
   \llabel{EQ35}
     \end{equation}
Integrating by parts in \eqref{EQ34}, we obtain
    \begin{align}
        \begin{split}
            G'_\alpha(t)&=-2\int_{\mathbb{T}^2}\int_{-\delta+t}^{1+\delta-t}
            \partial^\alpha\partial_t w\partial^\alpha(\partial_{tt}w-\Delta w)\, dx_3\,dx'
            \\&\indeq
            +2\int_{\mathbb{T}^2}\partial^\alpha\partial_t w\partial^\alpha\frac{\partial w}{\partial N}\Bigl|_{x_3=-\delta+t}^{x_3=1+\delta-t}\,dx'
            -\tilde G_\alpha(-\delta+t,t)
            -\tilde G_\alpha(1+\delta-t,t)
            \\&\leq
            -\int_{\mathbb{T}^2}(\partial^\alpha\partial_tw-\partial^\alpha\partial_3 w)^2
            \Big|_{x_3=1+\delta-t}
            \,dx'
            -\int_{\mathbb{T}^2}(\partial^\alpha\partial_tw+\partial^\alpha\partial_3 w)^2
            \Big|_{x_3=-\delta+t}
            \,dx'
            \leq 0
     .
        \end{split}
        \llabel{EQ36}
    \end{align}
Since $w(0)=0$ on $\Ome$,
    \begin{equation}
        G_\alpha(t)\leq G_\alpha(0)= \|\partial^\alpha\partial_tw(0)\|_{L^2(\mathbb{T}^2\times(-\delta,1+\delta))}^2
   \llabel{EQ37}    
     \end{equation}
for any $0\leq t\leq \delta$.
Therefore,
    \begin{align}
      \begin{split}
    &
    \|\partial^\alpha\partial_tw(t)\|_{L^2(\Ome)}
    +\|\partial^\alpha\nabla w(t)\|_{L^2(\Ome)}\leq 2(G_\alpha(t))^\frac{1}{2}\leq2(G_\alpha(0))^\frac{1}{2}
    \\&\indeq
    =2\|\partial^\alpha\partial_tw(0)\|_{L^2(\mathbb{T}^2\times(-\delta,1+\delta))} 
    .
  \end{split}
    \label{EQ38}
    \end{align}
Substituting \eqref{EQ38} into~\eqref{EQ20}, we obtain
    \begin{align}
        \begin{split}
        \llabel{EQ39}
    &\|\partial_tw(t)\|_{X_{\tau_1}(\Ome)}+\|\nabla w(t)\|_{X_{\tau_1}(\Ome)}
    \\&\indeq
    =\sum_{m=0}^\infty\sum_{|\alpha|=m}\epsilon^\alpha 
    \tau_1^{(m-3)_+}M_m(\|\partial^\alpha \partial_t w\|_{L^2(\Ome)}+\|\partial^\alpha \nabla w\|_{L^2(\Ome)})
    \\&\indeq
    \leq
    2\sum_{m=0}^\infty\sum_{|\alpha|=m}\epsilon^\alpha 
    \tau_1^{(m-3)_+}M_m\|\partial^\alpha\partial_tw(0)\|_{L^2(\mathbb{T}^2\times(-\delta,1+\delta))} 
    \leq 2\|\partial_t w(0)\|_{X_{\tau_1}}
        \end{split}
    \end{align}
for $0\leq t\leq \delta$.
\end{proof}

\begin{Lemma}[Short-time analytic estimates for the ALE coefficients]
\label{L02}
Suppose that $\partial_t w(\cdot,0)\in X_{\tau_1}(\mathbb{T}^2\times(-\delta,1+\delta))$. 
Then there exists a sufficiently small time $T>0$ such that the ALE coefficients satisfy the following estimates for all $0\le t\le T$:
\begin{enumerate}
    \item $\|b-I\|_{X_\tau}\lec \|\partial_tw(0)\|_{X_{\tau_1}}t$;
    \item $\|a-I\|_{X_\tau}\lec \|\partial_tw(0)\|_{X_{\tau_1}}t$;
    \item $\|\partial_t\psi\|_{X_\tau}\lec\|\partial_tw(0)\|_{X_{\tau_1}}$.
\end{enumerate}
\end{Lemma}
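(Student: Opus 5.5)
The plan is to reduce all three estimates to analytic bounds for the harmonic extension $\psi$ of $1+w$ from \eqref{EQ06}. Since the problem is periodic in $x'$, I would solve \eqref{EQ06} explicitly by Fourier series in $x'$. Write $\psi=x_3+\phi$, where $\phi$ is harmonic in $\Omega_1$ with $\phi=w$ on $\Gat$ and $\phi=0$ on $\Gamma_1$. For the mode $k\neq0$,
\[
\hat\phi_k(x_3,t)=\hat w_k(t)\,\frac{\sinh(|k|(2-x_3))}{\sinh|k|},
\]
and the zero mode is $\hat\phi_0=\hat w_0(t)(2-x_3)$. Then
\[
b-I,\ \ a-I,\ \ \partial_t\psi=\partial_t\phi
\]
are expressed through $\nabla\phi$ (and $\partial_t\phi$), together with the algebraic quantities $\partial_3\psi=1+\partial_3\phi$ and $1/\partial_3\psi$.

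The first step is an analytic elliptic estimate of the form
\[
\|\nabla\phi\|_{X_\tau(\Omega_1)}+\|\phi\|_{X_\tau(\Omega_1)}\lesssim \|w\|_{X_{\tau'}(\Gat)}+\|\nabla' w\|_{X_{\tau'}(\Gat)}
\]
for any $\tau<\tau'$, and the same with $\partial_t\phi$ and $\partial_t w$. From the explicit multiplier, $\partial_3^j$ acting on $\hat\phi_k$ produces $|k|^j$ times a bounded factor. Hence $x_3$-derivatives are traded for tangential ones, and each $L^2(\Omega_1)$ norm is bounded by the corresponding trace norm on $\Gat$. The factor $\epsilon_3=1/2$ absorbs the combinatorics of counting the multi-indices $\alpha$ with a given tangential order.

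The second step controls the boundary data. The traces on $\Gat$ (at $x_3=1$) are bounded by interior norms on $\mathbb T^2\times(0,1)$ via a trace inequality, or more simply on the enlarged strip. Lemma~\ref{L01} gives $\|\partial_t w(t)\|_{X_{\tau_1}}+\|\nabla w(t)\|_{X_{\tau_1}}\lesssim\|\partial_tw(0)\|_{X_{\tau_1}}$ for $t\le\delta$. Since $w(0)=0$, writing $w(t)=\int_0^t\partial_tw$ gives $\|\nabla w(t)\|_{X_{\tau_1}}+\|w(t)\|_{X_{\tau_1}}\lesssim t\,\|\partial_tw(0)\|_{X_{\tau_1}}$. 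The trace loss of half a derivative is paid for by the gap $\tau\le\tau_0<\tau_1$: the factor $(\tau/\tau_1)^m m^{1/2}$ is summable. Combined with the first step, this gives $\|\nabla\phi\|_{X_\tau}\lesssim tA$, which is item (1) for the entries $\partial_3\psi-1=\partial_3\phi$ and $-\partial_j\psi=-\partial_j\phi$. The same argument, without the time integration, gives item (3).

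For item (2), $a-I$ has entries $-\partial_j\phi/(1+\partial_3\phi)$ and $1/(1+\partial_3\phi)-1$. I would use that $X_\tau$ is a Banach algebra up to a constant: $\|fg\|_{X_\tau}\lesssim\|f\|_{X_\tau}\|g\|_{X_\tau}$. This should follow from Leibniz, the weights $M_m=(m+1)^3/m!$ (chosen precisely so that $\sum_j\binom mj M_jM_{m-j}\lesssim M_m$), and Sobolev embedding, the latter being the reason for the offset $(m-3)_+$. Then expand $1/(1+g)=\sum_n(-g)^n$ with $g=\partial_3\phi$. The series converges in $X_\tau$ once $C\|g\|_{X_\tau}\lesssim CtA<1/2$, which fixes the small $T$, and it gives $\|1/(1+g)-1\|_{X_\tau}\lesssim tA$.

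I expect the main obstacle to be the algebra property and the trace/elliptic step in the analytic norm. Specifically, one must verify the combinatorial inequality for $M_m$ with the $(m-3)_+$ shift, where the low-order terms require $L^\infty$ bounds via $H^2$, and make the half-derivative trace loss summable using $\tau_1>\tau$. For the trace step I would first try working directly on the enlarged strip, which avoids traces altogether.
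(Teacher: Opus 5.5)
Your outline is viable, and it differs from the paper's proof mainly in the elliptic step. The paper never writes $\tilde\psi=\psi-x_3$ explicitly. It uses $\partial_3^2\tilde\psi=-(\partial_1^2+\partial_2^2)\tilde\psi$ to get $S_k\le\frac14 S_{k-2}$; this is where $\epsilon_3=\frac12$ enters, playing the role of your bound $|k|^j\le 2^{j/2}(|k_1|^j+|k_2|^j)$. It then reduces to $S_0+S_1$ and controls these by the $H^1$ estimate for the harmonic extension with $H^{1/2}(\Gat)$ data, applied to tangential derivatives, followed by the trace theorem from $\Ome$. Your $\sinh$-multiplier computation replaces both the recursion and the abstract elliptic estimate with one explicit, lossless bound in terms of tangential $L^2(\Gat)$ norms. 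It exploits the flat periodic geometry, whereas the paper's recursion is the device it reuses for the Neumann pressure problem with a source in Lemma~\ref{L03}. For item (3), your direct use of Lemma~\ref{L01} for $\partial_tw(t)$, paying only the trace loss with the radius gap, is simpler than the paper's detour through $\partial_tw(t)=\partial_tw(0)+\int_0^t\Delta w$. Item (2) is the same Neumann-series argument as Lemma~\ref{AL03}.

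One intermediate claim is false as written and must be repaired. The bound $\|\nabla w(t)\|_{X_{\tau_1}}\lesssim t\|\partial_tw(0)\|_{X_{\tau_1}}$ does not follow from Lemma~\ref{L01}. Writing $\nabla w(t)=\int_0^t\nabla\partial_tw$ requires $\nabla\partial_tw$ at radius $\tau_1$, which is one derivative more than Lemma~\ref{L01} controls. The bound in fact fails uniformly: heuristically, $\nabla w(t)$ carries the multiplier $\sin(|\xi|t)$, which is at most $|\xi|t$ but not $\lesssim t$. So gaining the factor $t$ costs a full derivative, not just the half derivative lost in the trace. The fix is what the paper does in \eqref{EQ44}--\eqref{EQ46}. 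Pass to an intermediate radius $\tilde\tau\in(\tau_0,\tau_1)$ and use Lemma~\ref{AL01}: $\|\nabla w(t)\|_{X_{\tilde\tau}}\le\int_0^t\|\nabla\partial_tw\|_{X_{\tilde\tau}}\lesssim\int_0^t\|\partial_tw\|_{X_{\tau_1}}\lesssim tA$. Then pay the trace loss between $\tau_0$ and $\tilde\tau$. Since $\tau\le\tau_0<\tau_1$ is a fixed gap and only finitely many derivatives are lost, this closes.

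Separately, the weight inequality you quote is backwards. The quantity $\binom mj M_jM_{m-j}/M_m=\binom mj^2(j+1)^3(m-j+1)^3/(m+1)^3$ is exponentially large for $j\approx m/2$. What holds, and what underlies the product estimate Lemma~\ref{AL02} (which the paper cites rather than proves), is $\sum_{j=0}^m\binom mj M_m/(M_jM_{m-j})=\sum_{j=0}^m(m+1)^3/\bigl((j+1)^3(m-j+1)^3\bigr)\lesssim 1$.
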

\begin{proof}[Proof of Lemma~\ref{L02}]
We prove the three assertions separately.

\medskip
\noindent\textbf{Proof of (1).}
    We write $\alpha':=(\alpha_1,\alpha_2)\in \mathbb{N}_0^2$ and $\epsilon'=(\epsilon_1,\epsilon_2)=(1,1)$. We further define
    \begin{equation}
    S_k=\sum_{m=0}^\infty\sum_{|\alpha'|=m}\sum_{i=1}^3
    \|\partial^{\alpha'}\partial_3^k\partial_i\tilde\psi\|\tau^{(m+k-3)_+}(\epsilon')^{\alpha'}\epsilon_3^k M_{m+k},
   \llabel{EQ40}
     \end{equation}
where $\tilde\psi=\psi-x_3$,
for any $k\in \mathbb{N}_0$.
Using~\eqref{EQ06}, we obtain
    \begin{align}
        \begin{split}
            \llabel{EQ41}
            S_k
            &\leq\sum_{m=0}^\infty\sum_{|\alpha'|=m}\sum_{i=1}^3
            (\|\partial^{\alpha'}\partial_1^2\partial_3^{k-2}\partial_i\tilde\psi\|+
            \|\partial^{\alpha'}\partial_2^2\partial_3^{k-2}\partial_i\tilde\psi\|
            )\tau^{(m+k-3)_+}(\epsilon')^{\alpha'}\epsilon_3^k M_{m+k}
            \\&
            \leq \sum_{m=0}^\infty\sum_{|\alpha'|=m}\sum_{i=1}^3\sum_{j=1}^2
            \|\partial^{\alpha'}\partial_j^2\partial_3^{k-2}\partial_i\tilde\psi\|\tau^{(m+k-3)_+}
            (\epsilon')^{\alpha'}\epsilon_j^2\epsilon_3^{k-2}\underbrace{(\epsilon_3\epsilon_j^{-1})^2}_{=1/4}M_{m+k}
            \\&
            \leq \frac{1}{4}S_{k-2}.
        \end{split}
    \end{align}
Therefore, we deduce the elliptic estimate for at most one order normal derivative
    \begin{align}
      \Vert b-I\Vert_{X_{\tau}}
      \lec
        \|\nabla\tilde\psi\|_{X_\tau}\leq 3(S_0+S_1).
        \label{EQ42}
    \end{align}
We will estimate $S_0$ and $S_1$ separately using~\eqref{EQ06}.
Starting with $S_1$, we have
    \begin{align}
        \begin{split}
        \label{EQ43}
    S_1
    &=\sum_{m=0}^\infty\sum_{|\alpha'|=m}\sum_{i=1}^3
    \|\partial^{\alpha'}\partial_3\partial_i\tilde\psi\|(\epsilon')^{\alpha'}\epsilon_3
    \tau^{(m-2)_+}M_{m+1}
    \\&\leq
    \sum_{m=0}^\infty\sum_{|\alpha'|=m}\sum_{i=1}^2
    \|\partial^{\alpha'}\partial_i\partial_3\tilde\psi\|
    (\epsilon')^{\alpha'}\epsilon_i(\epsilon_3\epsilon_i^{-1})\tau^{(m-2)_+}M_{m+1}
    \\&\indeq
    +\sum_{m=0}^\infty\sum_{|\alpha'|=m}\sum_{i=1}^2\|\partial^{\alpha'}\partial_i^2\tilde\psi\|
    (\epsilon')^{\alpha'}\epsilon_i(\epsilon_3\epsilon_i^{-1})\tau^{(m-2)_+}M_{m+1}
    \\&\lec
    S_0+
    \sum_{m=0}^\infty\sum_{|\alpha'|=m}\sum_{i=1}^2\|\partial^{\alpha'}\partial_iw\|
    _{H^\frac{1}{2}(\tau_t)}
    (\epsilon')^{\alpha'}\epsilon_i\tau^{(m-2)_+}M_{m+1}
    .
        \end{split}
    \end{align}
 By the trace theorem, Young's inequality, and the initial condition~\eqref{EQ02}, we then obtain
    \begin{align}
        \begin{split}
            \label{EQ44}
            S_1
	    &\lec
            S_0+\sum_{m=0}^\infty\sum_{|\alpha'|=m}\sum_{i=1}^2
            \|\partial^{\alpha'}\partial_iw\|_{L^2(\Ome)}^\frac{1}{2}
            \|\partial^{\alpha'}\partial_i\nabla w\|_{L^2(\Ome)}^\frac{1}{2}
            (\epsilon')^{\alpha'}\epsilon_i\tau^{(m-2)_+}M_{m+1}
            \\&
	    \lec
            S_0+
            \sum_{m=1}^\infty\sum_{|\alpha|=m}
            \left\|\int_0^t\partial^\alpha \partial_tw\,ds\right\|_{L^2(\Ome)}
            \epsilon^\alpha\tau^{(m-3)_+}M_{m}
            \\&\indeq
            +\sum_{m=1}^\infty\sum_{|\alpha|=m}
            \left\|\int_0^t\partial^\alpha\nabla \partial_t w\,ds\right\|_{L^2(\Ome)}
            \epsilon^\alpha\tau^{(m-3)_+}M_{m}
            \\&\lec
            S_0+
            \int_0^t\left(\sum_{m=1}^\infty\sum_{|\alpha|=m}\left\|\partial^\alpha \partial_tw\right\|_{L^2(\Ome)}
            \epsilon^\alpha\tau^{(m-3)_+}M_{m}\right)\,ds
            \\&\indeq+
            \int_0^t\left(\sum_{m=1}^\infty\sum_{|\alpha|=m}\left\|\partial^\alpha \nabla\partial_tw\right\|_{L^2(\Ome)}
            \epsilon^\alpha\tau^{(m-3)_+}M_{m}\right)\,ds,
        \end{split}
    \end{align}
where we used Minkowski's inequality in the last step of~\eqref{EQ44}. The trace estimate also produces lower-order boundary terms, which are
controlled by~$S_0$. Therefore, we arrive at
    \begin{equation}
            S_1\lec S_0+\int_0^t
	       (\|\partial_tw\|_{X_\tau}+\|\nabla\partial_tw\|_{X_\tau})\,ds.
   \llabel{EQ45}
     \end{equation}
For any $0\leq t\leq T_0$, since $\frac{\tau_0}{2}\leq \tau\leq\tau_0< \tau_1$, we obtain
    \begin{align}
        \begin{split}
            \label{EQ46}
            S_1\lec S_0+\int_0^t\|\partial_tw\|_{X_{\tau_1}(\Ome)}\,ds\lec S_0+t\|\partial_tw(0)\|_{X_{\tau_1}},
        \end{split}
    \end{align}
by Lemmas~\ref{L01} and~\ref{AL01}. The estimate for $S_0$ is obtained in the same way as~\eqref{EQ44}
and~\eqref{EQ46}, except that no $S_0$ term appears on the right-hand side
and the summation starts from $m=0$. Thus,
analogously,
    \begin{align}
        \label{EQ47}
        S_0\lec \int_0^t
	(\|\partial_tw\|_{X_\tau}+\|\nabla\partial_tw\|_{X_\tau})\,ds\lec
        &\|\partial_tw(0)\|_{X_{\tau_1}}.
    \end{align}
Combining \eqref{EQ46} and \eqref{EQ47}, we derive
    \begin{align}
    \label{EQ48}
    \|\partial_1\psi\|_{X_\tau}+\|\partial_2\psi\|_{X_\tau}+\|\partial_3\psi-1\|_{X_\tau}\lec t\|\partial_tw(0)\|_{X_{\tau_1}}
    ,
    \end{align}
which proves the assertion~(1).
    
\medskip
\noindent\textbf{Proof of (2).}
We choose 
\begin{align}
    \llabel{EQ49}
    T:= \bigl(2(C_1+1)(C_2+1)\|\partial_tw(0)\|_{X_{\tau_1}}\bigr)^{-1},
\end{align}
where $C_1$ is the constant provided in Lemma~\ref{L02}\,(1), while $C_2$ is provided in~\eqref{EQ143} from the Appendix. For any $0\leq t\leq T$, we obtain
\begin{equation}
    \|\partial_3\psi-1\|_{X_\tau}\leq (2C_1C_2)^{-1},
   \llabel{EQ50}
     \end{equation}
by the estimate~\eqref{EQ48}. Using Lemma~\ref{AL03}, we deduce
\begin{align}
\label{EQ51}
    \|(\partial_3\psi)^{-1}\|_{X_\tau}\lec 1.
\end{align}
Combining~\eqref{EQ51} with the definition~\eqref{EQ09}, we obtain the
estimate in the assertion~(2).

\medskip
\noindent\textbf{Proof of~(3).}
The function $\partial_t\psi$ satisfies the boundary value problem
\begin{align}
  \begin{cases}
    \Delta \partial_t\psi =0 & \text{in } \Omega_1,\\
\partial_t\psi(x_1,x_2,1,t)=\partial_tw(x_1,x_2,t) & \text{on } \Gat\times[0,T],\\
\partial_t\psi(x_1,x_2,2,t)=0 & \text{on } \Gamma_1\times[0,T].
\end{cases}  
\llabel{EQ52}
\end{align}
An analogous argument to~\eqref{EQ42} and~\eqref{EQ43} shows that
  \begin{align}
   \label{EQ53}
    \|\nabla\partial_t\psi\|_{X_\tau}\lec S_0^{(t)}+S_1^{(t)}
\end{align}
and
\begin{align}
    S_1^{(t)}\lec S_0^{(t)}+\sum_{m=0}^\infty\sum_{|\alpha'|=m}\sum_{i=1}^2\|\partial^{\alpha'}\partial_i\partial_tw\|
    _{H^\frac{1}{2}(\tau_t)}
    (\epsilon')^{\alpha'}\epsilon_i\tau^{(m-2)_+}M_{m+1},
\end{align}
where
\begin{equation}
S_k^{(t)}=\sum_{m=0}^\infty\sum_{|\alpha'|=m}\sum_{i=1}^3
    \|\partial^{\alpha'}\partial_3^k\partial_i\partial_t\psi\|\tau^{(m+k-3)_+}(\epsilon')^{\alpha'}\epsilon_3^k M_{m+k}
        \comma k=0,1.
   \llabel{EQ54}
     \end{equation}
By the trace theorem, Young's inequality, and the initial condition~\eqref{EQ02}, we derive
    \begin{align}
        \begin{split}
            \llabel{EQ55}
            S_1^{(t)}\lec&S_0^{(t)}+\sum_{m=0}^\infty\sum_{|\alpha'|=m}
            \sum_{i=1}^2
            \|\partial^{\alpha'}\partial_i\partial_tw\|_{L^2(\Ome)}^\frac{1}{2}
            \|\partial^{\alpha'}\partial_i\nabla\partial_t w\|_{L^2(\Ome)}^\frac{1}{2}
            (\epsilon')^{\alpha'}\epsilon_i\tau^{(m-2)_+}M_{m+1}
            \\\lec&
            S_0^{(t)}+
            \sum_{m=1}^\infty\sum_{|\alpha|=m}
            \left(
            \left\|\int_0^t\partial^\alpha \partial_{tt}w\,ds\right\|_{L^2(\Ome)}
            +\|\partial^\alpha\partial_tw(0)\|
            \right)
            \epsilon^\alpha\tau^{(m-3)_+}M_{m}
            \\&\indeq
            +\sum_{m=1}^\infty\sum_{|\alpha|=m}
            \left(
            \left\|\int_0^t\partial^\alpha\nabla \partial_{tt} w\,ds\right\|_{L^2(\Ome)}
            +\|\partial^\alpha\nabla\partial_tw(0)\|
            \right)
            \epsilon^\alpha\tau^{(m-3)_+}M_{m}
            \\\lec&
            S_0^{(t)}+
            \int_0^t\left(\sum_{m=1}^\infty\sum_{|\alpha|=m}\left\|\partial^\alpha \partial_{tt}w\right\|_{L^2(\Ome)}
            \epsilon^\alpha\tau^{(m-3)_+}M_{m}\right)\,ds
            \\&\indeq+
            \int_0^t\left(\sum_{m=1}^\infty\sum_{|\alpha|=m}\left\|\partial^\alpha \nabla\partial_{tt}w\right\|_{L^2(\Ome)}
            \epsilon^\alpha\tau^{(m-3)_+}M_{m}\right)\,ds
            \\&\indeq
            +\|\partial^\alpha\partial_tw(0)\|_{X_\tau}+\|\partial^\alpha\nabla\partial_tw(0)\|_{X_\tau}
         ,
        \end{split}
    \end{align}
    where the lower-order boundary terms are
controlled by~$S_0$.
Using Lemma~\ref{AL01} and the wave equation~\eqref{EQ01}, we obtain
    \begin{align}
    \begin{split}
    \label{EQ56}
    S_1^{(t)}&\lec S_0^{(t)}+\int_0^t\|\partial_{tt}w\|_{X_\tau}+\|\nabla\partial_{tt}w\|_{X_\tau}\,ds
    +\|\partial_tw(0)\|_{X_\tau}+\|\nabla\partial_tw(0)\|_{X_\tau}
    \\&=S_0^{(t)}+\int_0^t\|\Delta w\|_{X_\tau}+\|\nabla\Delta w\|_{X_\tau}\,ds
    +\|\partial_tw(0)\|_{X_\tau}+\|\nabla\partial_tw(0)\|_{X_\tau}
    \\&\lec S_0^{(t)}+\int_0^t\|\nabla w\|_{X_{\tau_1}}\,ds+\|\partial_tw(0)\|_{X_{\tau_1}}
    .
    \end{split}
    \end{align}
Similarly, $S_0^{(t)}$ satisfies 
\begin{align}\label{EQ57}
S_0^{(t)}\lec \int_0^t\|\nabla w\|_{X_{\tau_1}}\,ds+\|\partial_tw(0)\|_{X_{\tau_1}} .
\end{align}
Combining~\eqref{EQ53}, \eqref{EQ56}, and~\eqref{EQ57}, we derive
\begin{align}
    \|\nabla\partial_t\psi\|_{X_\tau}\lec\int_0^t\|\nabla w\|_{X_{\tau_1}}\,ds+\|\partial_tw(0)\|_{X_{\tau_1}}
    \lec
    \|\partial_tw(0)\|_{X_{\tau_1}}(1+t)
    .
   \llabel{EQ58}
\end{align}
Therefore, using the Poincar\'e inequality, we obtain
\begin{align}
\begin{split}
\|\partial_t\psi\|_{X_\tau}&
        =\sum_{m=0}^{\infty}\sum_{|\alpha|=m}\epsilon^\alpha 
        \tau^{(m-3)_+}M_m\|\partial^\alpha \partial_t\psi\|
        =\|\partial_t\psi\|+\sum_{m=1}^{\infty}\sum_{|\alpha|=m}\epsilon^\alpha 
        \tau^{(m-3)_+}M_m\|\partial^\alpha \partial_t\psi\|
        \\&\lec
        \|\partial_3\partial_t\psi\|+\sum_{m=0}^{\infty}\sum_{|\alpha|=m}
        \epsilon^\alpha 
        \tau^{(m-3)_+}M_m\|\partial^\alpha\nabla \partial_t\psi\|
        \underbrace{(\tau^{(m-2)_+-(m-3)_+}M_{m+1}M_m^{-1})}_{\lec 1}
        \\&
        \lec \|\nabla\partial_t\psi\|_{X_\tau}\lec \|\partial_tw(0)\|_{X_{\tau_1}}(1+t)
        \lec \|\partial_tw(0)\|_{X_{\tau_1}}(1+T)
	,
\end{split}
   \llabel{EQ59}
\end{align}
proving the assertion~(3).
\end{proof}

\section{Pressure estimate}
\label{sec04}
In this section, we complete the elliptic estimates for the pressure equation.

\begin{lemma}
\label{L03}
    Under the conditions of Theorem~\ref{T01}, we have
    \begin{align}
    \llabel{EQ60}
    \|\nabla q\|_{X_\tau}\lec (\|\partial_t w(0)\|_{X_{\tau_1}}+1)
    (\|v\|_{X_\tau}+\|v\|_{Y_\tau})(\|v\|_{X_\tau}+1)+\|\partial_t w(0)\|_{X_{\tau_1}}
    \end{align}
for any $0\leq t\leq T$.
\end{lemma}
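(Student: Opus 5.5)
The plan is to derive an elliptic problem for $q$ from \eqref{EQ14} and treat it as a perturbation of the Neumann problem for the flat Laplacian on $\Omega_1$. First I will write the momentum equation in the form
\[
a_{ki}\partial_k q=-\partial_t v_i-\tilde v_j a_{kj}\partial_k v_i ,
\]
where $\tilde v$ denotes the relative (ALE) velocity, i.e., $v$ with $v_3$ replaced by $v_3-\partial_t\psi$. Multiplying by $J$ and applying $\partial_j(b_{ji}\,\cdot)$, i.e., using the Piola identity \eqref{EQ12}, I obtain
\[
\partial_j(b_{ji}a_{ki}\partial_k q)=-\partial_j\bigl(b_{ji}\partial_t v_i\bigr)-\partial_j\bigl(b_{ji}\tilde v_l a_{kl}\partial_k v_i\bigr).
\]
The constraint $b_{ji}\partial_j v_i=0$ lets me trade the time derivative for the term $-\partial_j(\partial_t b_{ji}\, v_i)$. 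Adding $\Delta q$ to both sides gives
\[
\Delta q=\partial_j f_j,\qquad f_j=(\delta_{jk}-b_{ji}a_{ki})\partial_k q+\partial_t b_{ji}v_i-b_{ji}\tilde v_l a_{kl}\partial_k v_i .
\]
This is a genuine divergence form, since $\partial_j(b_{ji}\partial_k(\cdots))$ is already a divergence.

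Next I will derive the Neumann data. On $\Gamma_1$, the relation $v_3=0$ and the third component of the equation give $b_{3i}a_{ki}\partial_kq=-b_{3i}\tilde v_l a_{kl}\partial_k v_i$. On $\Gat$, I differentiate $b_{3i}v_i=\partial_t w$ in time and obtain $b_{3i}\partial_tv_i=\partial_{tt}w-\partial_tb_{3i}v_i=\Delta w-\partial_t b_{3i}v_i$. Hence $N_jf_j$ is an explicit expression on each boundary piece, consistent with $\partial_N q=N_jf_j$. The compatibility condition needed for the Neumann problem holds automatically by the divergence structure, with $\int_{\Gat}\partial_t w=0$ guaranteed by \eqref{EQ24}. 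Therefore
\[
\Delta q=\div f\ \text{ in }\Omega_1,\qquad \partial_N q=f\cdot N\ \text{ on }\partial\Omega_1 .
\]
On the flat strip this is the weak problem $\int\nabla q\cdot\nabla\phi=\int f\cdot\nabla\phi$. It gives $\|\nabla q\|\le\|f\|$, and in the periodic tangential directions also $\|\bar\partial^{\alpha'}\nabla q\|\le\|\bar\partial^{\alpha'}f\|$. Normal derivatives I will recover from the equation as in the proof of Lemma~\ref{L02}, expressing $\partial_{33}q=\partial_jf_j-\partial_{11}q-\partial_{22}q$. The weight $\epsilon_3=1/2$ again makes the recursion over $k$ summable. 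The outcome is the analytic elliptic bound
\[
\|\nabla q\|_{X_\tau}\lec\|f\|_{X_\tau}+\|\div f\|_{X_\tau}\text{-type terms}.
\]
More precisely, the $\partial_3$ recursion costs one derivative of $f$. This is exactly where the $Y_\tau$ norm appears, because $mM_m\tau^{m-4}$ measures the extra derivative, and the factor $m$ is compensated by the ratio $M_{m+1}/M_m\sim 1/m$.

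Then I will estimate $f$ using the algebra property of $X_\tau$ and $Y_\tau$ (products from the Appendix lemmas, e.g.\ Lemma~\ref{AL01}) together with Lemma~\ref{L02}. For the perturbative term, $\|(I-ba^T)\nabla q\|_{X_\tau}\lec(\|b-I\|_{X_\tau}+\|a-I\|_{X_\tau})(1+\cdots)\|\nabla q\|_{X_\tau}\lec At\|\nabla q\|_{X_\tau}$. This is absorbed for $t\le T$ small, and the same holds for its normal-recursion contribution. The transport term gives $(1+A)\|v\|_{X_\tau}(\|v\|_{X_\tau}+\|v\|_{Y_\tau})+A(\|v\|_{X_\tau}+\|v\|_{Y_\tau})$, using $\|\partial_t\psi\|_{X_\tau}\lec A$. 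For $\partial_tb$, whose entries are $\partial_t\nabla\psi$, I have the bound $\|\nabla\partial_t\psi\|_{X_\tau}\lec A$ from the proof of Lemma~\ref{L02}(3), which contributes $A\|v\|_{X_\tau}$. The boundary datum $\Delta w$ on $\Gat$ is controlled by $\|\nabla w\|_{X_{\tau_1}}\lec A$ via Lemma~\ref{L01}; here the gap $\tau<\tau_1$ absorbs the extra derivative, which gives the additive term $\|\partial_tw(0)\|_{X_{\tau_1}}$.

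The main obstacle will be the boundary data: passing from the Neumann trace to interior analytic norms requires trace estimates at every derivative order. For the $\Delta w$ piece, I plan to use a harmonic lifting, as in the treatment of $\psi$, and the radius gap. For the nonlinear boundary terms, I plan to use the fact that they equal $N\cdot f$ for $f$ defined in the interior, so that no separate trace is needed. I expect the bookkeeping of the one lost derivative into $Y_\tau$, rather than into $X_\tau$ at a larger radius, to be the delicate point.
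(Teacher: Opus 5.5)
Your overall route is the paper's. It uses the same $f_j$ (your $\tilde v$ form of the transport term agrees with \eqref{EQ65}), the Neumann data from the time-differentiated kinematic condition, and tangential $L^2$ bounds by integration by parts. It recovers normal derivatives from $\partial_{33}q=\partial_jf_j-\partial_{11}q-\partial_{22}q$ with $\epsilon_3=1/2$, and closes with Lemmas~\ref{AL02}, \ref{AL03}, \ref{L02} and absorption of $\|I-ba^T\|_{X_\tau}\|\nabla q\|_{X_\tau}$.

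However, your account of the elliptic step is wrong in a way that would prevent the estimate from closing. The normal recursion does not cost a derivative of $f$. In
\[
\|\partial^{\alpha'}\partial_3^k\partial_iq\|\le\|\partial^{\alpha'}\partial_3^{k-2}\partial_i\partial_jf_j\|+\|\partial^{\alpha'}\partial_3^{k-2}\partial_i\partial_{11}q\|+\|\partial^{\alpha'}\partial_3^{k-2}\partial_i\partial_{22}q\|
\]
the left side has $|\alpha'|+k$ derivatives on $\nabla q$, and the first term on the right has $|\alpha'|+k$ derivatives on $f$. So both carry the same weight $\tau^{(|\alpha'|+k-3)_+}M_{|\alpha'|+k}$. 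Summing as in \eqref{EQ75}--\eqref{EQ80} gives $\|\nabla q\|_{X_\tau}\lec\|f\|_{X_\tau}+\|\partial_tw(0)\|_{X_{\tau_1}}$, with neither $\|\div f\|_{X_\tau}$ nor any $Y_\tau$-norm of $f$.

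This is not cosmetic. The divergence $\div f$ contains $\nabla^2v$, which $\|v\|_{X_\tau}+\|v\|_{Y_\tau}$ does not control, since the relevant weights differ by a factor of order $m/\tau$. It also contains $(I-ba^T)\nabla^2q$, which cannot be absorbed into $\|\nabla q\|_{X_\tau}$. The $Y_\tau$-norm enters only afterwards, when you bound $\|f\|_{X_\tau}$: the factors $\partial_mv_i$ and $\partial_3v_i$ in $f$ are handled by the second estimate of Lemma~\ref{AL02}, which is where $(m+1)M_{m+1}\sim M_m$ is used. Your own estimate of the transport term is consistent only with this corrected bound. So the $\div f$ terms and the separate ``normal-recursion contribution'' of the perturbative term should simply be dropped.

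Second, your displayed Neumann problem, your weak form, and the tangential bound $\|\partial^{\alpha'}\nabla q\|\le\|\partial^{\alpha'}f\|$ omit the interface datum, although you use it later. Since $N=-e_3$ on $\Gat$, one has $\partial_Nq-f\cdot N=\partial_t^2w=\Delta w$ there. The weak form is therefore $\int_{\Omega_1}\nabla q\cdot\nabla\phi=\int_{\Omega_1}f\cdot\nabla\phi+\int_{\Gat}\Delta w\,\phi$, and the bound without the boundary term is false as stated. The paper keeps this term in the tangential identity and bounds it by the trace inequality, which yields \eqref{EQ73}. The resulting $\|\Delta w\|_{X_\tau}+\|\nabla\Delta w\|_{X_\tau}$ is controlled by $\|\nabla w\|_{X_{\tau_1}}\lec\|\partial_tw(0)\|_{X_{\tau_1}}$ via Lemmas~\ref{AL01} and~\ref{L01}. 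Your harmonic lifting would also work, but it is not needed.
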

\begin{proof}[Proof of lemma~\ref{L03}]
Applying $\partial_j(b_{ji}\cdot)$ to the Euler equation \eqref{EQ14}, we get
\begin{align}
    \label{EQ61}
    \partial_j(b_{ji}a_{ki}\partial_kq)=-\partial_j(b_{ji}\partial_tv_i)-
    \partial_j(b_{ji}v_ka_{mk}\partial_mv_i)+\partial_j(a_{ji}\psi_t\partial_3v_i).
\end{align}
By the Piola identity and the divergence-free condition, the first term on the right hand side of~\eqref{EQ61} can be rewritten as
\begin{align}
\label{EQ62}
    -\partial_j(b_{ji}\partial_tv_i)=\partial_j(\partial_tb_{ji}v_i)
\end{align}
since
\begin{equation}
\partial_t\partial_j(b_{ji}v_i)=0.
   \llabel{EQ63}
     \end{equation}
Substituting~\eqref{EQ62} into~\eqref{EQ61}, we derive
\begin{align}
    \begin{split}
        \label{EQ64}
        \Delta q=\partial_j(\delta_{jk}\partial_k q)=\partial_j(b_{ji}a_{ki}\partial_kq)
        +\partial_j\left(
        (\delta_{jk}-b_{ji}a_{ki})\partial_kq
        \right)
        =\partial_j f_j,
    \end{split}
\end{align}
where 
\begin{align}
\label{EQ65}
f_j=\partial_tb_{ji}v_{i}-b_{ji}v_ka_{mk}\partial_mv_i+a_{ji}\psi_t\partial_3v_i
+(\delta_{jk}-b_{ji}a_{ki})\partial_kq
.    
\end{align}
For the boundary condition, we apply $b_{3i}\cdot$ to each side of the
Euler equation obtaining
\eqref{EQ14}
\begin{equation}
b_{3i}a_{ki}\partial_k q=-b_{3i}\partial_tv_i
-b_{3i}v_ka_{mk}\partial_mv_i+a_{3i}\psi_t\partial_3v_i.
   \llabel{EQ66}
     \end{equation}
On the top boundary of the elastic body, $\Gat$, the kinematic condition \eqref{EQ18} implies
\begin{equation}
-b_{3i}\partial_tv_i=\partial_tb_{3i}v_i-\partial_{t}^2w
.
   \llabel{EQ67}
     \end{equation}
Therefore, on $\Gat$, we have
\begin{align}
    \begin{split}
        \llabel{EQ68}
        \partial_3q&=-b_{3i}\partial_tv_i
        -b_{3i}v_ka_{mk}\partial_mv_i+a_{3i}\psi_t\partial_3v_i
        +(\partial_3q-b_{3i}a_{ki}\partial_k q)
        \\&
        =\partial_tb_{3i}v_i-\partial_{t}^2w
        -b_{3i}v_ka_{mk}\partial_mv_i+a_{3i}\psi_t\partial_3v_i
        +(\partial_3q-b_{3i}a_{ki}\partial_k q)
        \\&
        =f_3-\partial_t^2w.
    \end{split}
\end{align}
On the far-top boundary of the fluid, by the definitions~\eqref{EQ06}, \eqref{EQ09}, and~\eqref{EQ11}, together with the boundary condition~\eqref{EQ17}, we obtain
\begin{align}
    \partial_3q&=-b_{3i}\partial_tv_i
        -b_{3i}v_ka_{mk}\partial_mv_i+a_{3i}\psi_t\partial_3v_i
        +(\partial_3q-b_{3i}a_{ki}\partial_k q)=0
\llabel{EQ69}
\end{align}
and
\begin{align}
\llabel{EQ70}
    f_3=\partial_tb_{3i}v_{i}-b_{3i}v_ka_{mk}\partial_mv_i+a_{3i}\psi_t\partial_3v_i
+(\delta_{3k}-b_{3i}a_{ki})\partial_kq=0.
\end{align}
Thus, the pressure $q$ satisfies the boundary value problem
\begin{align}
\llabel{EQ71}
    \begin{cases}
        &\Delta q=\partial_j f_j \onon{\Omega_1}
        \\&\partial_3q-f_3=-\partial_t^2w \onon{\Gat},
        \\&\partial_3q=f_3=0 \onon{\Gamma_1},
    \end{cases}
\end{align}
where $f_j$ is defined by~\eqref{EQ65}.
Integrating by parts, we obtain
\begin{align}
\llabel{EQ72}
\begin{split}
    \int_{\Omega_1} |\partial^{\alpha'}\nabla q|^2
    &=-\int_{\Gat}
    \partial^{\alpha'}\partial_3q\partial^{\alpha'}q
    -\int_{\Omega_1}\partial^{\alpha'}\Delta q\partial^{\alpha'}q
    \\&=-\int_{\Gat}
    \partial^{\alpha'}\partial_3q\partial^{\alpha'}q
    -\int_{\Omega_1}\partial^{\alpha'}\partial_jf_j\partial^{\alpha'}q
    \\&=-\int_{\Gat}\partial^{\alpha'}(f_3-\partial_t^2w)\partial^{\alpha'}q
    +\int_{\Omega_1}\partial^{\alpha'}f_j\partial^{\alpha'}\partial_jq
    +\int_{\Gat}\partial^{\alpha'}f_3\partial^{\alpha'}q
    \\&=\int_{\Gat}\partial^{\alpha'}\Delta w\partial^{\alpha'}q
    +\int_{\Omega_1}\partial^{\alpha'}f_j\partial^{\alpha'}\partial_jq.
\end{split}
\end{align}
By H\"older's, Young's, and the trace inequalities, we further derive
\begin{align}
    \label{EQ73}
    \|\partial^{\alpha'}\nabla q\|
    \lec
    \|\partial^{\alpha'}\Delta w\|
    +\|\partial^{\alpha'}\Delta\nabla w\|+\|\partial^{\alpha'}f_j\|.
\end{align}
For $k\in\mathbb{N}_0$, set
\begin{align}
    \llabel{EQ74}
    S^{(q)}_k=\sum_{m=0}^\infty\sum_{|\alpha'|=m}\sum_{i=1}^3
    \|\partial^{\alpha'}\partial_3^k\partial_i q\|
    \tau^{(m+k-3)_+}(\epsilon')^{\alpha'}\epsilon_3^k M_{m+k}.
\end{align}
Then for any $k\geq 2$, using \eqref{EQ64}, we obtain
\begin{align}
    \begin{split}
        \label{EQ75}
    S_k^{(q)}&\leq\sum_{m=0}^\infty\sum_{|\alpha'|=m}\sum_{i=1}^3\sum_{j=1,2}
    \|\partial^{\alpha'}\partial_3^{k-2}\partial_j^2\partial_i q\|
    \tau^{(m+k-3)_+}(\epsilon')^{\alpha'}\epsilon_j^2\epsilon_3^{k-2} M_{m+k}
    (\epsilon_3\epsilon_j^{-1})^2
    \\&\indeq
    +\sum_{m=0}^\infty\sum_{|\alpha'|=m}\sum_{i=1}^3
    \|\partial^{\alpha'}\partial_3^{k-2}\partial_i \partial_jf_j\|
    \tau^{(m+k-3)_+}(\epsilon')^{\alpha'}\epsilon_i\epsilon_j\epsilon_3^{k-2} M_{m+k}
    (\epsilon_3^2\epsilon_i^{-1}\epsilon_j^{-1})
    \\&\leq
    \frac{1}{2}S_{k-2}^{(q)}+S_{k}^{(f)},
    \end{split}
\end{align}
where\begin{equation}
S_k^{(f)}=\sum_{m=0}^\infty\sum_{|\alpha'|=m}
    \|\partial^{\alpha'}\partial_3^k f\|
    \tau^{(m+k-3)_+}(\epsilon')^{\alpha'}\epsilon_3^k M_{m+k}.
   \llabel{EQ76}
     \end{equation}
Summing~\eqref{EQ75} over all $k\geq 2$, we obtain
\begin{equation}
\sum_{k=0}^\infty S_k^{(q)}\leq S_0^{(q)}+S_1^{(q)}+\frac{1}{2}\sum_{k=0}^\infty S_k^{(q)}+
\sum_{k=2}^\infty S_k^{(f)}.
   \llabel{EQ77}
     \end{equation}
Therefore, we derive
\begin{align}
    \label{EQ78}
    \sum_{k=0}^\infty S_k^{(q)}\lec S_0^{(q)}+S_1^{(q)}+\|f\|_{X_\tau}.
\end{align}
We first bound $S_1^{(q)}$ in terms of $S_0^{(q)}$ and $f$,
\begin{align}
    \begin{split}
    \label{EQ79}
    S_1^{(q)}
    &=\sum_{m=0}^\infty\sum_{|\alpha'|=m}\sum_{i=1}^3
    \|\partial^{\alpha'}\partial_3 \partial_i q\|
    \tau^{(m-2)_+}(\epsilon')^{\alpha'}\epsilon_3 M_{m+1}
    \\&=\sum_{m=0}^\infty\sum_{|\alpha'|=m}\sum_{i=1}^2
    \|\partial^{\alpha'}\partial_i \partial_3 q\|
    \tau^{(m-2)_+}(\epsilon')^{\alpha'}\epsilon_i M_{m+1}
    (\epsilon_3\epsilon_i^{-1})
    \\&\indeq
    +\sum_{m=0}^\infty\sum_{|\alpha'|=m}\sum_{i=1}^2
    \|\partial^{\alpha'}\partial_i^2 q\|
    \tau^{(m-2)_+}(\epsilon')^{\alpha'}\epsilon_i M_{m+1}
    (\epsilon_3\epsilon_i^{-1})
    \\&\indeq
    +\sum_{m=0}^\infty\sum_{|\alpha'|=m}\sum_{i=1}^2
    \|\partial^{\alpha'}\partial_j f_j\|
    \tau^{(m-2)_+}(\epsilon')^{\alpha'}\epsilon_j M_{m+1}
    (\epsilon_3\epsilon_j^{-1})
    \\&
    \lec S_0^{(q)}+\|f\|_{X_\tau}
    .
    \end{split}
\end{align}
By \eqref{EQ73}, we obtain
\begin{align}
    \begin{split}
    \label{EQ80}
    S_0^{(q)}&=\sum_{m=0}^\infty\sum_{|\alpha'|=m}\sum_{i=1}^3
    \|\partial^{\alpha'}\partial_i q\|
    \tau^{(m-3)_+}(\epsilon')^{\alpha'} M_{m}
    \\&\leq
    \sum_{m=0}^\infty\sum_{|\alpha'|=m}(\|\partial^{\alpha'}\Delta w\|
    +\|\partial^{\alpha'}\Delta\nabla w\|+\|\partial^{\alpha'}f_j\|)
    \tau^{(m-3)_+}(\epsilon')^{\alpha'} M_{m}
    \\&\lec
    \|\nabla w\|_{X_\tau}+\|f\|_{X_\tau}
    \lec
    \|\partial_t w(0)\|_{X_{\tau_1}}+\|f\|_{X_\tau},
    \end{split}
\end{align}
where we used Lemma~\ref{AL01} in the last inequality. Combining \eqref{EQ78},
\eqref{EQ79}, and~\eqref{EQ80}, we get
\begin{align}
    \begin{split}
    \llabel{EQ81}
    \|\nabla q\|_{X_\tau}\lec \|\partial_t w(0)\|_{X_{\tau_1}}+\|f\|_{X_\tau}.
    \end{split}
\end{align}
Using the expression for $f_j$ in \eqref{EQ65} and Lemmas~\ref{AL01} and~\ref{AL02}, we  obtain
\begin{align}
    \begin{split}
        \|\nabla q\|_{X_\tau}&
        \lec \|\partial_t w(0)\|_{X_{\tau_1}}
        +\|\partial_tb_{ji}\|_{X_\tau}\|v_{i}\|_{X_\tau}+
        \|b_{ji}\|_{X_\tau}\|v_k\|_{X_\tau}\|a_{mk}\|_{X_\tau}
        (\|v_i\|_{X_\tau}+\|v_i\|_{Y_\tau})
        \\&\indeq
        +\|a_{ji}\|_{X_\tau}\|\psi_t\|_{X_\tau}(\|v_i\|_{X_\tau}+\|v_i\|_{Y_\tau})
        +\|\delta_{jk}-b_{ji}a_{ki}\|_{X_\tau}\|\nabla q\|_{X_\tau}
        \\&\lec \|\partial_t w(0)\|_{X_{\tau_1}}
        +(\|\partial_t w(0)\|_{X_{\tau_1}}+1)
        (\|v\|_{X_\tau}+1)(\|v_i\|_{X_\tau}+\|v_i\|_{Y_\tau})
       \\&\indeq
        +\|I-ba^T\|_{X_\tau}\|\nabla q\|_{X_\tau}.
    \end{split}
   \llabel{EQ82}
\end{align}
By Lemma~\ref{L02}, for $0\leq t\leq T$, we have 
\begin{align}
    \|I-ba^T\|_{X_\tau}\lec \|I-b\|_{X_\tau}+\|I-a^T\|_{X_\tau}+\|(I-b)(I-a^T)\|_{X_\tau}\lec
    \|\partial_tw(0)\|_{X_{\tau_1}}t.
   \llabel{EQ83}
\end{align}
Finally, we derive
\begin{align}
    \llabel{EQ84}
    \|\nabla q\|_{X_\tau}\lec (\|\partial_t w(0)\|_{X_{\tau_1}}+1)
    (\|v\|_{X_\tau}+\|v\|_{Y_\tau})(\|v\|_{X_\tau}+1)+\|\partial_t w(0)\|_{X_{\tau_1}},
\end{align}
for $0\leq t\leq T$, which concludes the proof of Lemma~\ref{L03}.
\end{proof}

\section{A priori estimate}
\label{sec05}
In this section, we prove the a priori estimate
on the velocity
by using the pressure estimate
and the product inequalities in analytic norms.
Applying $\partial^\alpha$ on each side of \eqref{EQ14}, testing by $\partial^\alpha v_i$, and
using H\"older's inequality, we obtain
\begin{align}
    \begin{split}
    \frac{1}{2}\frac{d}{dt}\|\partial^\alpha v\|^2\leq 
    \|\partial^\alpha (v_k a_{mk}\partial_m v)\|\|\partial^\alpha v\|
    +\|\partial^\alpha(J^{-1}\partial_t\psi\partial_3 v_i)\|\|\partial^\alpha v\|
    +\|\partial^\alpha(a_{ki}\partial_k q)\|\|\partial^\alpha v\|.
    \end{split}
   \llabel{EQ85}
\end{align}
Equivalently, we apply the $L^2$ energy estimate to each differentiated
equation, multiply it by the corresponding weight appearing in the definition
of the $X_\tau$-norm, and then sum over all multi-indices~$\alpha$. We get
\begin{align}
    \begin{split}
    \frac{d}{dt}\|v\|_{X_\tau}&=\sum_{m=0}^\infty\sum_{|\alpha|=m}\frac{d}{dt}
    \|\partial^\alpha v\|\epsilon^\alpha\tau^{(m-3)_+}M_m
    \\&\indeq+
    \dot\tau\sum_{m=4}^\infty\sum_{|\alpha|=m}\|\partial^\alpha v\|\epsilon^\alpha
    (m-3)\tau^{m-4}M_m
    \\&\lec
    \|v_ka_{mk}\partial_mv\|_{X_\tau}
    +\|J^{-1}\partial_t\psi \partial_3v\|_{X_\tau}
    +\|a_{ki}\partial_kq\|_{X_\tau}
    .
    \end{split}
   \llabel{EQ86}
\end{align}
By Lemmas~\ref{L02}, \ref{L03}, \ref{AL02}, and~\ref{AL03}, we derive
\begin{align}
    \begin{split}
    \frac{d}{dt}\|v\|_{X_\tau}&\lec 
    \dot\tau\|v\|_{Y_\tau}
    +\|v\|_{X_\tau}(\|\partial_t w(0)\|_{X_{\tau_1}}+1)
    (\|v\|_{X_\tau}+\|v\|_{Y_\tau})
    \\&\indeq
    +(\|\partial_t w(0)\|_{X_{\tau_1}}+1)^2
    (\|v\|_{X_\tau}+\|v\|_{Y_\tau})(\|v\|_{X_\tau}+1)
    \\&\indeq
    +\|\partial_t w(0)\|_{X_{\tau_1}}(\|\partial_t w(0)\|_{X_{\tau_1}}+1)
    \\&=
    (-M+(\|\partial_t w(0)\|_{X_{\tau_1}}+1)^2
    +(\|\partial_t w(0)\|_{X_{\tau_1}}+1)\|v\|_{X_\tau})\|v\|_{Y_\tau}
    \\&\indeq+
    (\|\partial_t w(0)\|_{X_{\tau_1}}+1)\|\partial_t w(0)\|_{X_{\tau_1}}
    +(\|\partial_t w(0)\|_{X_{\tau_1}}+1)^2(\|v\|_{X_\tau}+1)\|v\|_{X_\tau},
    \end{split}
   \llabel{EQ87}
\end{align}
which proves the desired a priori estimate in Theorem~\ref{T01}.

\section{Construction of solutions}
\label{sec06}
In this section, we prove the existence part of Theorem~\ref{T01} by an
iteration argument. Since both the ALE incompressibility constraint and the
kinematic boundary condition contain the time-dependent coefficients $b_{ji}$,
the next velocity $v^{(n+1)}$ and the pressure $q^{(n+1)}$ have to be
solved simultaneously. The pressure is therefore the Lagrange multiplier which
enforces the constraints at each step of the construction.
We first state the iterative construction precisely.

\begin{Proposition}[Iterative construction]\label{prop1}
Assume the hypotheses of Theorem~\ref{T01}. Let $A$ and $B$ be the
constants defined in Theorem~\ref{T01}. Choose a constant $C_0\geq1$,
depending only on the constants in the product estimates, the pressure estimate,
and the ALE coefficient estimates. Set
\begin{equation}
M:=10C_0(A+1)^3(A+1+B).
   \llabel{EQ88}
\end{equation}
Choose $T_*>0$ sufficiently small so that
\[
\tau_0-MT_*\geq \frac{\tau_0}{2},
\]
the estimates for the ALE coefficients in Lemma~\ref{L02} hold on
$[0,T_*]$, the perturbative pressure estimates of Section~\ref{sec04} hold
on $[0,T_*]$, and the fixed point argument below is valid on $[0,T_*]$.
Then set
\begin{equation}
T_0:=
\min\left\{
T_*,
\frac{\log(4/3)}{100C_0(A+B+1)^5}
\right\}.
   \llabel{EQ89}
\end{equation}
Let
\[
\tau(t)=\tau_0-Mt .
\]
Then there exists a sequence
$
\{(v^{(n)},q^{(n)})\}_{n\geq0}
$
such that
\begin{align}
  \begin{split}
&
v^{(n)}\in C([0,T_0];X_{\tau(t)})
\cap L^1(0,T_0;Y_{\tau(t)}),
\\&
\nabla q^{(n)}\in L^1(0,T_0;X_{\tau(t)}).
  \end{split}
   \llabel{EQ107}
     \end{align}
Moreover, for every $n\geq0$, the pair
$(v^{(n+1)},q^{(n+1)})$ solves
\begin{align}
    \begin{split}
    \llabel{EQ90}
    &\partial_tv_i^{(n+1)}
    +v_k^{(n)}a_{mk}\partial_mv_i^{(n)}
    -J^{-1}\psi_t\partial_3v_i^{(n)}
    +a_{ki}\partial_kq^{(n+1)}=0,
    \\&
    b_{ji}\partial_jv_i^{(n+1)}=0
     \inin{\Omega_1},
    \end{split}
\end{align}
with
\begin{align}
    v^{(0)}(t,x):=v_0(x),
    \andand
    v^{(n+1)}(0,x)=v_0(x),
   \llabel{EQ91}
\end{align}
and
  \begin{align}
  \begin{split}
   &
       v_3^{(n+1)}=0
    \onon{\Gamma_1},
   \\&
    b_{3i}v_i^{(n+1)}=\partial_tw
    \onon{\Gat} .
  \end{split}
   \label{EQ92}
  \end{align}
The sequence satisfies
\begin{align}
    \sup_{t\in[0,T_0]}\|v^{(n)}(t)\|_{X_{\tau(t)}}
    +
    M\int_0^{T_0}\|v^{(n)}(s)\|_{Y_{\tau(s)}}\,ds
    \leq B.
   \label{EQ93}
\end{align}
Finally, $\{v^{(n)}\}$ is Cauchy in
$C([0,T_0];X_{\tau(t)})$ and~$L^1(0,T_0;Y_{\tau(t)})$. Its limit is the
solution asserted in Theorem~\ref{T01}.
\end{Proposition}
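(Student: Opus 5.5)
The proof goes by induction on $n$, in three stages: solvability of each linear step, uniform bounds, and contraction. Throughout, the ALE coefficients $a,b,J,\psi$ are fixed functions of $t$, built from the analytic wave solution of Lemma~\ref{L01}. They obey Lemma~\ref{L02} on $[0,T_*]$ and do not depend on the iteration index. Hence each step is a \emph{linear} constrained problem for $(v^{(n+1)},q^{(n+1)})$, with the forcing
\[
F^{(n)}_i:=-v_k^{(n)}a_{mk}\partial_mv_i^{(n)}+J^{-1}\psi_t\partial_3v_i^{(n)}.
\]

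\textbf{Linear solvability.} Given $v^{(n)}$ satisfying \eqref{EQ93}, I would first determine $q^{(n+1)}$. Apply $\partial_j(b_{ji}\cdot)$ to the equation and $b_{3i}\cdot$ on the boundary, as in the proof of Lemma~\ref{L03}. Imposing the constraints on $v^{(n+1)}$ through $\partial_t\partial_j(b_{ji}v^{(n+1)}_i)=0$ and $\partial_t(b_{3i}v_i^{(n+1)})=\partial_t^2w$ gives a Neumann problem of the form
\[
\partial_j(b_{ji}a_{ki}\partial_kq^{(n+1)})=\partial_j\bigl(\partial_tb_{ji}v_i^{(n+1)}+b_{ji}F^{(n)}_i\bigr).
\]
The boundary data are $\partial_t^2w=\Delta w$ on $\Gat$ and zero on $\Gamma_1$. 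The equation still contains $v^{(n+1)}$ only through the zeroth-order term $\partial_tb\,v^{(n+1)}$. The compatibility condition for this Neumann problem is exactly the zero-flux condition \eqref{EQ24}.

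I would then solve the coupled system $\partial_tv^{(n+1)}=F^{(n)}-a^T\nabla q^{(n+1)}[v^{(n+1)}]$ as a linear ODE in $X_{\tau(t)}$. The map $v\mapsto\nabla q[v]$ is bounded in $X_\tau$ with no derivative loss in $v$: this follows from the perturbative elliptic argument of Lemma~\ref{L03}, where $\|I-ba^T\|_{X_\tau}\lesssim At$ is absorbed. The forcing $F^{(n)}$ loses one derivative, but in the integrated-in-time form it is paid for by $\int\|v^{(n)}\|_{Y_\tau}$. Finally, taking $\partial_j(b_{ji}\cdot)$ of the equation shows that $\partial_j(b_{ji}v_i^{(n+1)})$ and $b_{3i}v_i^{(n+1)}-\partial_tw$ are constant in time. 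By \eqref{EQ28} they vanish at $t=0$, so the constraints \eqref{EQ92} hold for all $t$.

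\textbf{Uniform bound.} Repeat the a priori computation of Section~\ref{sec05} for $v^{(n+1)}$. The decreasing radius produces $-M\|v^{(n+1)}\|_{Y_\tau}$. The terms $F^{(n)}$ are controlled by the product lemmas, giving
\[
(A+1)(\|v^{(n)}\|_{X_\tau}+1)(\|v^{(n)}\|_{X_\tau}+\|v^{(n)}\|_{Y_\tau}).
\]
The pressure term is bounded by Lemma~\ref{L03} with the mixed $v^{(n)},v^{(n+1)}$ structure. Integrating on $[0,t]$ and using \eqref{EQ93} for $v^{(n)}$, the $Y$-contribution of $v^{(n)}$ is at most $C_0(A+1)^2(B+1)B/M\le B/10$ by the choice of $M$. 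The $Y$-contribution of $v^{(n+1)}$ is absorbed into $-M\|v^{(n+1)}\|_{Y_\tau}$. The remaining linear terms are handled by Gronwall with exponent $C_0(A+B+1)^5T_0\le\log(4/3)$. Together with $\|v_0\|\le B/4$, this gives \eqref{EQ93} for $n+1$.

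\textbf{Contraction and limit.} For $V^{(n)}=v^{(n+1)}-v^{(n)}$ and $Q^{(n)}=q^{(n+1)}-q^{(n)}$, the differences satisfy the same linear system with homogeneous constraints and zero initial data. The forcing is $F^{(n)}-F^{(n-1)}$, which is bilinear in $V^{(n-1)}$ and $v^{(n)},v^{(n-1)}$. The same estimate yields
\[
\sup_t\|V^{(n)}\|_{X_\tau}+M\!\int\|V^{(n)}\|_{Y_\tau}\le\tfrac12\Bigl(\sup_t\|V^{(n-1)}\|_{X_\tau}+M\!\int\|V^{(n-1)}\|_{Y_\tau}\Bigr),
\]
using the factor $1/10$ from $M$ and the smallness of $T_0$. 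The sequence is therefore Cauchy, and passing to the limit in the integral form of the equations gives the solution. Uniqueness follows from the same difference estimate.

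The main obstacle is the linear solvability step. The pressure equation and the constraints must be shown consistent at the level of analytic norms, with the pressure gaining no loss in $v^{(n+1)}$. If a direct ODE argument fails, I would regularize by Galerkin truncation in the tangential Fourier modes and pass to the limit using the uniform analytic bounds.
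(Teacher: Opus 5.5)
Your proposal is correct and follows essentially the same route as the paper. The shared ingredients are these: a coupled linear step in which $q^{(n+1)}$ depends on $v^{(n+1)}$ only through the zeroth-order term $\partial_t b_{ji}v_i^{(n+1)}$; preservation of the constraints by showing that $\partial_j(b_{ji}v_i^{(n+1)})$ and $b_{3i}v_i^{(n+1)}-\partial_t w$ are constant in time; Gronwall bounds closed by the choices of $M$ and $T_0$; and a contraction for the differences. Two differences are cosmetic. The paper carries out your ``linear ODE'' step as a Banach fixed point for $U\mapsto v_0-\int_0^t(\cdots)\,ds$ on $C([0,T_0];X_{\tau(t)})$, using $\|\nabla(q[U]-q[V])\|_{X_\tau}\le C_0(A+1)\|U-V\|_{X_\tau}$. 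Also, as your own remark on the pressure implies, the uniform bound contains no $Y_\tau$-contribution of $v^{(n+1)}$ that needs absorbing.
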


\begin{proof}
We divide the proof into four parts.

\smallskip
\noindent
{\bf Step 1. Solvability of one iterative step.}
Assume that $v^{(n)}$ has already been constructed and satisfies
\[
v^{(n)}\in C([0,T_0];X_{\tau(t)})
\cap L^1(0,T_0;Y_{\tau(t)}),
\]
together with the constraints and boundary conditions
  \begin{align}
  \begin{split}
   &
b_{ji}\partial_jv_i^{(n)}=0
\inin{\Omega_1},
  \\&
v_3^{(n)}=0
\onon{\Gamma_1},
   \\&
b_{3i}v_i^{(n)}=\partial_tw
\onon{\Gat} .
  \end{split}
   \llabel{EQ116}
  \end{align}
We prove that the $(n+1)$-th system is well-defined in the same analytic
class.
Let $U\in C([0,T_0];X_{\tau(t)})$ be a candidate for~$v^{(n+1)}$. For this
fixed $U$, define $q[U]$, with normalization
\[
\int_{\Omega_1}q[U]\,dx=0,
\]
as the solution of
\begin{align}
\label{EQ94}
    \begin{cases}
        \Delta q[U]=\partial_j f_j[U]
        & \text{in }\Omega_1,
        \\[2mm]
        \partial_3q[U]-f_3[U]=-\partial_t^2w
        & \text{on }\Gat,
        \\[2mm]
        \partial_3q[U]=f_3[U]=0
        & \text{on }\Gamma_1,
    \end{cases}
\end{align}
where
\begin{align}
\llabel{EQ95}
    f_j[U]
    =
    \partial_tb_{ji}U_i
    -b_{ji}v_k^{(n)}a_{mk}\partial_mv_i^{(n)}
    +a_{ji}\psi_t\partial_3v_i^{(n)}
    +(\delta_{jk}-b_{ji}a_{ki})\partial_kq[U].
\end{align}
The Neumann compatibility condition follows from the flux compatibility
condition in Theorem~\ref{T01}. Indeed,
\[
\int_{\Gat}\partial_t^2 w\,dx'
=
\frac{d}{dt}\int_{\Gat}\partial_t w\,dx'
=0.
\]
We solve \eqref{EQ94} perturbatively. The only term in $f_j[U]$ containing
$\nabla q[U]$ is
\[
(\delta_{jk}-b_{ji}a_{ki})\partial_kq[U].
\]
By the choice of $T_*$, the quantity $\|I-ba^T\|_{X_{\tau(t)}}$ is
sufficiently small on $[0,T_0]$. Hence, the perturbative elliptic estimate
from Section~\ref{sec04} gives a unique
\[
\nabla q[U]\in L^1(0,T_0;X_{\tau(t)}),
\]
and
\begin{align}
\label{EQ96}
    \begin{split}
    \|\nabla q[U]\|_{X_{\tau(t)}}
    &\leq
    C_0(A+1)\|U\|_{X_{\tau(t)}}+C_0A
    \\&\indeq
    +C_0(A+1)
    \bigl(
    \|v^{(n)}\|_{X_{\tau(t)}}+\|v^{(n)}\|_{Y_{\tau(t)}}
    \bigr)
    \|v^{(n)}\|_{X_{\tau(t)}} .
    \end{split}
\end{align}
Moreover, subtracting the elliptic problems corresponding to two candidates
$U,V\in C([0,T_0];X_{\tau(t)})$, the same perturbative estimate gives
\begin{align}
\label{EQ97}
    \|\nabla(q[U]-q[V])\|_{X_{\tau(t)}}
    \leq
    C_0(A+1)\|U-V\|_{X_{\tau(t)}}.
\end{align}
Define
\begin{align}
\llabel{EQ98}
    \Phi(U)_i(t)
    :=
   v_{0i}
    -
    \int_0^t
    \left(
    v_k^{(n)}a_{mk}\partial_mv_i^{(n)}
    -J^{-1}\psi_t\partial_3v_i^{(n)}
    +a_{ki}\partial_kq[U]
    \right)(s)\,ds .
\end{align}
By the product estimates and \eqref{EQ96}, the integrand belongs to
$L^1(0,T_0;X_{\tau(t)})$. Hence,
\[
\Phi(U)\in C([0,T_0];X_{\tau(t)}).
\]
Using \eqref{EQ97}, we obtain
\[
\|\Phi(U)(t)-\Phi(V)(t)\|_{X_{\tau(t)}}
\leq
C_0(A+1)
\int_0^t\|U(s)-V(s)\|_{X_{\tau(s)}}\,ds.
\]
Therefore,
\[
\sup_{t\in[0,T_0]}
\|\Phi(U)(t)-\Phi(V)(t)\|_{X_{\tau(t)}}
\leq
C_0(A+1)T_0
\sup_{t\in[0,T_0]}
\|U(t)-V(t)\|_{X_{\tau(t)}}.
\]
By the choice of $T_0$, we have
\[
C_0(A+1)T_0\leq \frac12
,
\]
showing that
$\Phi$ is a contraction on $C([0,T_0];X_{\tau(t)})$. Let
$v^{(n+1)}$ denote its fixed point, and set
\[
q^{(n+1)}:=q[v^{(n+1)}].
\]
Then $v^{(n+1)}\in C([0,T_0];X_{\tau(t)})$,
$\nabla q^{(n+1)}\in L^1(0,T_0;X_{\tau(t)})$ and
\[
\partial_tv_i^{(n+1)}
+
v_k^{(n)}a_{mk}\partial_mv_i^{(n)}
-
J^{-1}\psi_t\partial_3v_i^{(n)}
+
a_{ki}\partial_kq^{(n+1)}
=0.
\]
Moreover, $q^{(n+1)}$ solves
\begin{align}
\label{EQ99}
    \begin{cases}
        \Delta q^{(n+1)}=\partial_j f_j^{(n+1)}
        & \text{in }\Omega_1,
        \\[2mm]
        \partial_3q^{(n+1)}-f_3^{(n+1)}=-\partial_t^2w
        & \text{on }\Gat,
        \\[2mm]
        \partial_3q^{(n+1)}=f_3^{(n+1)}=0
        & \text{on }\Gamma_1,
    \end{cases}
\end{align}
where
\begin{align}
\label{EQ100}
    f_j^{(n+1)}
    =
    \partial_tb_{ji}v_i^{(n+1)}
    -b_{ji}v_k^{(n)}a_{mk}\partial_mv_i^{(n)}
    +a_{ji}\psi_t\partial_3v_i^{(n)}
    +(\delta_{jk}-b_{ji}a_{ki})\partial_kq^{(n+1)}.
\end{align}
We now verify that the constraints and boundary conditions are preserved. Using
the evolution equation and \eqref{EQ99}--\eqref{EQ100}, we have
\[
\begin{aligned}
\partial_t\partial_j(b_{ji}v_i^{(n+1)})
&=
\partial_j(\partial_tb_{ji}v_i^{(n+1)})
+
\partial_j(b_{ji}\partial_tv_i^{(n+1)})
\\
&=
\partial_j\left(
\partial_tb_{ji}v_i^{(n+1)}
-b_{ji}v_k^{(n)}a_{mk}\partial_mv_i^{(n)}
+a_{ji}\psi_t\partial_3v_i^{(n)}
-b_{ji}a_{ki}\partial_kq^{(n+1)}
\right)
=0.
\end{aligned}
\]
Since $b(\cdot,0)=I$, the compatibility condition \eqref{EQ28} gives
\[
\partial_j(b_{ji}v_i^{(n+1)})(0)=\partial_i(v_0)_i=0.
\]
Therefore,
\[
\partial_j(b_{ji}v_i^{(n+1)})=0
\inin{\Omega_1}.
\]
By the Piola identity, this is equivalent to
\[
b_{ji}\partial_jv_i^{(n+1)}=0
\inin{\Omega_1}.
\]
On $\Gat$, using the evolution equation and the boundary condition in
\eqref{EQ99}, we obtain
\[
\begin{aligned}
\partial_t(b_{3i}v_i^{(n+1)}-\partial_tw)
&=
\partial_tb_{3i}v_i^{(n+1)}
+
b_{3i}\partial_tv_i^{(n+1)}
-\partial_t^2w
\\
&=
\partial_tb_{3i}v_i^{(n+1)}
-b_{3i}v_k^{(n)}a_{mk}\partial_mv_i^{(n)}
+a_{3i}\psi_t\partial_3v_i^{(n)}
-b_{3i}a_{ki}\partial_kq^{(n+1)}
-\partial_t^2w
=0.
\end{aligned}
\]
Again by the compatibility condition \eqref{EQ28},
\[
b_{3i}(\cdot,0) v_{0i}-\partial_tw(\cdot,0)=0
\onon{\Gat}.
\]
Hence,
\[
b_{3i}v_i^{(n+1)}=\partial_tw
\onon{\Gat} .
\]
Finally, on $\Gamma_1$, we have
\[
\psi_t=0,\qquad
\partial_1\psi=\partial_2\psi=0,\qquad
v_3^{(n)}=0,
\quad
\text{and}
\quad
\partial_3q^{(n+1)}=0.
\]
Since $v_3^{(n)}=0$ on $\Gamma_1$, its tangential derivatives vanish
there. Thus the third component of the evolution equation gives
\[
\partial_tv_3^{(n+1)}=0
\onon{\Gamma_1}.
\]
Since $v_{03}=0$ on $\Gamma_1$, we get
\[
v_3^{(n+1)}=0
\onon{\Gamma_1}
.
\]
It remains to obtain the $L^1_tY_{\tau(t)}$ regularity. Applying the analytic
energy estimate to the $(n+1)$-th linear system and using \eqref{EQ96} with
$U=v^{(n+1)}$, we get
\begin{align}
\label{EQ101}
\begin{split}
&
\frac{d}{dt}\|v^{(n+1)}\|_{X_{\tau}}
+
M\|v^{(n+1)}\|_{Y_{\tau}}
\\&\indeq
\leq
C_0(A+1)\|v^{(n+1)}\|_{X_{\tau}}
+
C_0(A+1)(A+1+\|v^{(n)}\|_{X_{\tau}})
\|v^{(n)}\|_{Y_{\tau}}
\\&\indeq\indeq
+
C_0(A+1)^2
(\|v^{(n)}\|_{X_{\tau}}+1)\|v^{(n)}\|_{X_{\tau}}
+
C_0(A+1)^2 .
\end{split}
\end{align}
The right-hand side belongs to $L^1(0,T_0)$ by the induction hypothesis.
Integrating \eqref{EQ101} in time gives
\[
v^{(n+1)}\in L^1(0,T_0;Y_{\tau(t)})
,
\]
which proves the solvability of the $(n+1)$-th system in the desired class.

\smallskip
\noindent
{\bf Step 2. Uniform estimates.}
Set
  \begin{align}
  \begin{split}
   &
X_n(t):=\|v^{(n)}(t)\|_{X_{\tau(t)}},
  \\&
Y_n(t):=\|v^{(n)}(t)\|_{Y_{\tau(t)}}
\end{split}
   \llabel{EQ115}
  \end{align}
and
\[
B_n:=
\sup_{t\in[0,T_0]}X_n(t)
+
M\int_0^{T_0}Y_n(s)\,ds .
\]
From \eqref{EQ101}, we obtain
\begin{align}
\label{EQ102}
\begin{split}
\frac{d}{dt}X_{n+1}(t)+MY_{n+1}(t)
&\leq
C_0(A+1)X_{n+1}(t)
+
C_0(A+1)(A+1+X_n(t))Y_n(t)
\\&\indeq
+
C_0(A+1)^2(X_n(t)+1)X_n(t)
+
C_0(A+1)^2 .
\end{split}
\end{align}
We first note that~$MT_0\leq1$. Hence,
\[
B_0
=
\sup_{t\in[0,T_0]}\|v_0\|_{X_{\tau(t)}}
+
M\int_0^{T_0}\|v_0\|_{Y_{\tau(t)}}\,dt
\leq
\|v_0\|_{X_{\tau_0}}+\|v_0\|_{Y_{\tau_0}}
\leq \frac14B .
\]
Assume that $B_n\leq B$. By Gronwall's inequality applied to
\eqref{EQ102}, we get 
\[
\begin{aligned}
&
X_{n+1}(t)+M\int_0^tY_{n+1}(s)\,ds
\\&\indeq
\leq
e^{C_0(A+1)T_0}
\bigg(
X_0
+
\int_0^{T_0}
C_0(A+1)(A+1+X_n(s))Y_n(s)\,ds
\\&\indeq\indeq
+
\int_0^{T_0}
\left(
C_0(A+1)^2(X_n(s)+1)X_n(s)
+
C_0(A+1)^2
\right)ds
\bigg)
\end{aligned}
\]
for $0\leq t\leq T_0$,
By the definition of $M$,
\[
\int_0^{T_0}
C_0(A+1)(A+1+X_n(s))Y_n(s)\,ds
\leq
C_0(A+1)(A+1+B)M^{-1}B
\leq
\frac1{10}B.
\]
By the definition of $T_0$, and since $B\geq4$,
\[
\int_0^{T_0}
\left(
C_0(A+1)^2(X_n(s)+1)X_n(s)
+
C_0(A+1)^2
\right)ds
\leq
C_0(A+1)^2(B+1)^2T_0
\leq
\frac1{10}B.
\]
Also,
\[
e^{C_0(A+1)T_0}\leq \frac43.
\]
Therefore,
\[
X_{n+1}(t)+M\int_0^tY_{n+1}(s)\,ds
\leq
\frac43
\left(
\frac14B+\frac1{10}B+\frac1{10}B
\right)
\leq
\frac35B.
\]
Taking the supremum over $t\in[0,T_0]$, we obtain
\[
B_{n+1}\leq B.
\]
By induction,
\[
B_n\leq B
    \comma n\geq0
    ,
\]
proving~\eqref{EQ93}.

\smallskip
\noindent
{\bf Step 3. Difference estimate and summability.}
Set
\[
\widetilde v^{(n+1)}:=v^{(n+1)}-v^{(n)}
\andand
\widetilde q^{(n+1)}:=q^{(n+1)}-q^{(n)}.
\]
Subtracting the equations for $v^{(n+1)}$ and $v^{(n)}$, we get
\begin{align}
\label{EQ103}
\begin{split}
&\partial_t\widetilde v_i^{(n+1)}
+
\widetilde v_k^{(n)}a_{mk}\partial_mv_i^{(n)}
+
v_k^{(n-1)}a_{mk}\partial_m\widetilde v_i^{(n)}
-
J^{-1}\psi_t\partial_3\widetilde v_i^{(n)}
+
a_{ki}\partial_k\widetilde q^{(n+1)}
=0,
\end{split}
\end{align}
with
\begin{align}
  \begin{split}
&
b_{ji}\partial_j\widetilde v_i^{(n+1)}=0
\inin{\Omega_1}
,
\\&
\widetilde v^{(n+1)}(0)=0
\inin{\Omega_1}
\end{split}
   \llabel{EQ108}
     \end{align}
and
  \begin{align}
  \begin{split}
   &
\widetilde v_3^{(n+1)}=0
\onon{\Gamma_1},
\\&
b_{3i}\widetilde v_i^{(n+1)}=0
\onon{\Gat} .
  \end{split}
   \llabel{EQ109}
  \end{align}
The corresponding pressure estimate is
\begin{align}
\label{EQ104}
\begin{split}
\|\nabla\widetilde q^{(n+1)}\|_{X_{\tau}}
&\leq
C_0(A+1)\|\widetilde v^{(n+1)}\|_{X_{\tau}}
\\&\indeq
+
C_0(A+1)^2
\|\widetilde v^{(n)}\|_{X_{\tau}}
\bigl(\|v^{(n)}\|_{X_{\tau}}+\|v^{(n)}\|_{Y_{\tau}}\bigr)
\\&\indeq
+
C_0(A+1)^2
\|v^{(n-1)}\|_{X_{\tau}}
\bigl(
\|\widetilde v^{(n)}\|_{X_{\tau}}
+
\|\widetilde v^{(n)}\|_{Y_{\tau}}
\bigr).
\end{split}
\end{align}
Applying the analytic energy estimate to \eqref{EQ103} and using
\eqref{EQ104}, we obtain
\begin{align}
\label{EQ105}
\begin{split}
\frac{d}{dt}\|\widetilde v^{(n+1)}\|_{X_{\tau}}
+
M\|\widetilde v^{(n+1)}\|_{Y_{\tau}}
&\leq
C_0(A+1)\|\widetilde v^{(n+1)}\|_{X_{\tau}}
\\&\indeq
+
C_0(A+1)^3
\|\widetilde v^{(n)}\|_{X_{\tau}}
\bigl(\|v^{(n)}\|_{X_{\tau}}+\|v^{(n)}\|_{Y_{\tau}}\bigr)
\\&\indeq
+
C_0(A+1)^3
(\|v^{(n-1)}\|_{X_{\tau}}+1)
\bigl(
\|\widetilde v^{(n)}\|_{X_{\tau}}
+
\|\widetilde v^{(n)}\|_{Y_{\tau}}
\bigr).
\end{split}
\end{align}
Define
\[
\widetilde X_n(t):=\|\widetilde v^{(n)}(t)\|_{X_{\tau(t)}}
\andand
\widetilde Y_n(t):=\|\widetilde v^{(n)}(t)\|_{Y_{\tau(t)}}.
\]
Also, let
\[
\widetilde B_n
:=
\sup_{t\in[0,T_0]}\widetilde X_n(t)
+
M\int_0^{T_0}\widetilde Y_n(s)\,ds.
\]
Using $B_n\leq B$ and Gronwall's inequality in \eqref{EQ105}, we find
\[
\widetilde B_{n+1}
\leq
\frac83
\max\{M^{-1},T_0\}C_0(A+1)^3B\,\widetilde B_n.
\]
By the choices of $M$ and $T_0$,
\[
\frac83
\max\{M^{-1},T_0\}C_0(A+1)^3B
\leq
\frac4{15}.
\]
Hence,
 \begin{equation}
  \widetilde B_{n+1}\leq \frac4{15}\widetilde B_n.
  \label{EQ106}
 \end{equation}
Since $\widetilde B_1<\infty$, \eqref{EQ106} implies
\[
\sum_{n=1}^{\infty}
\sup_{t\in[0,T_0]}\|\widetilde v^{(n)}(t)\|_{X_{\tau(t)}}<\infty
\]
and
\[
\sum_{n=1}^{\infty}
\int_0^{T_0}\|\widetilde v^{(n)}(t)\|_{Y_{\tau(t)}}\,dt<\infty.
\]
Therefore, $\{v^{(n)}\}$ is Cauchy in
$C([0,T_0];X_{\tau(t)})$ and in $L^1(0,T_0;Y_{\tau(t)})$. Thus there
exists
\[
v\in C([0,T_0];X_{\tau(t)})
\cap L^1(0,T_0;Y_{\tau(t)})
\]
such that
\[
v^{(n)}\to v
\inin{C([0,T_0];X_{\tau(t)})}
\]
and
\[
v^{(n)}\to v
\inin{L^1(0,T_0;Y_{\tau(t)})}.
\]
The pressure differences are also summable. Indeed, integrating
\eqref{EQ104} in time and using $B_n\leq B$, we get
\[
\sum_{n=1}^{\infty}
\int_0^{T_0}
\|\nabla\widetilde q^{(n)}(t)\|_{X_{\tau(t)}}\,dt<\infty.
\]
Hence, there exists
\[
\nabla q\in L^1(0,T_0;X_{\tau(t)})
\]
such that
\[
\nabla q^{(n)}\to \nabla q
\inin{L^1(0,T_0;X_{\tau(t)})}.
\]

\smallskip
\noindent
{\bf Step 4. Passage to the limit.}
We pass to the limit in the iterative equation. Since $a$, $b$, $J$, and
$\psi_t$ are fixed functions determined by $w$, the linear coefficient
terms pass to the limit directly. For the transport term, the product estimate
gives
\[
\begin{aligned}
&\int_0^{T_0}
\left\|
v_k^{(n)}a_{mk}\partial_mv_i^{(n)}
-
v_ka_{mk}\partial_mv_i
\right\|_{X_{\tau(t)}}\,dt
\\&\indeq
\leq
C_0
\sup_{t\in[0,T_0]}\|v^{(n)}(t)-v(t)\|_{X_{\tau(t)}}
\int_0^{T_0}\|v^{(n)}(t)\|_{Y_{\tau(t)}}\,dt
\\&\indeq\indeq
+
C_0
\sup_{t\in[0,T_0]}\|v(t)\|_{X_{\tau(t)}}
\int_0^{T_0}\|v^{(n)}(t)-v(t)\|_{Y_{\tau(t)}}\,dt,
\end{aligned}
\]
which tends to zero. Similarly,
\[
J^{-1}\psi_t\partial_3v^{(n)}
\to
J^{-1}\psi_t\partial_3v
\inin{L^1(0,T_0;X_{\tau(t)})}
\]
and
\[
a_{ki}\partial_kq^{(n+1)}
\to
a_{ki}\partial_kq
\inin{L^1(0,T_0;X_{\tau(t)})}.
\]
Passing to the limit in the integral formulation of the iterative equation, we obtain
\[
v_i(t)=v_{0i}-\int_0^t
\left(v_k a_{mk}\partial_mv_i-J^{-1}\psi_t\partial_3v_i+a_{ki}\partial_kq\right)(s)\,ds .
\]
Equivalently, $v$ satisfies \eqref{EQ14} in the sense of distributions in time; since the integrand belongs to
$L^1(0,T_0;X_{\tau(t)})$, the equation holds in the corresponding $L^1$-sense.
Thus $v$ satisfies
\[
\partial_tv_i
+
v_ka_{mk}\partial_mv_i
-
J^{-1}\psi_t\partial_3v_i
+
a_{ki}\partial_kq
=0
\]
in $L^1(0,T_0;X_{\tau(t)})$, with $v(0)=v_0$.
Since each $v^{(n)}$ satisfies
  \begin{align}
  \begin{split}
   &b_{ji}\partial_jv_i^{(n)}=0,
   \\&
v_3^{(n)}=0
\onon{\Gamma_1},
   \\&
   b_{3i}v_i^{(n)}=\partial_tw\onon{\Gat},   
\end{split}
   \llabel{EQ111}
  \end{align}
we may pass to the limit and obtain
  \begin{align}
  \begin{split}
   &
   b_{ji}\partial_jv_i=0
   \inin{\Omega_1},
   \\&
   v_3=0\onon{\Gamma_1},   
   \\&
   b_{3i}v_i=\partial_tw\onon{\Gat}
   .   
  \end{split}
   \llabel{EQ110}
  \end{align}
Therefore, $v$ solves \eqref{EQ14} with boundary conditions \eqref{EQ17} and~\eqref{EQ18}. The uniform estimate \eqref{EQ93} passes to the limit by Fatou's
lemma, yielding~\eqref{EQ31}.
It remains only to record uniqueness. Let $v$ and $\bar v$ be two solutions
in the class
\[
C([0,T_0];X_{\tau(t)})\cap L^1(0,T_0;Y_{\tau(t)})
\]
with the same initial data. Applying the same difference estimate as above to
$v-\bar v$ and the corresponding pressure difference gives
\[
\sup_{t\in[0,T]}\|v(t)-\bar v(t)\|_{X_{\tau(t)}}
\leq
C_0\int_0^T
\|v(t)-\bar v(t)\|_{X_{\tau(t)}}\,dt
\]
for $T\leq T_0$, after absorbing the $Y_\tau$-term by the decreasing
analytic radius. Since $v(0)=\bar v(0)$, Gronwall's inequality gives
$v=\bar v$ on a sufficiently small time interval. Repeating the argument
finitely many times gives uniqueness on $[0,T_0]$.

This completes the proof of Proposition~\ref{prop1} and thus also of Theorem~\ref{T01}.
\end{proof}

\section{A mismatch counterexample}
\label{sec07}
In this section, we provide an example showing that the pressure matching condition
is not preserved by the Euler evolution
in analytic classes, even when it holds initially and the
kinematic boundary condition is imposed.
Setting
$w_t(\cdot,0)=0$ implies $w\equiv 0$. Then the kinematic condition reduces to
$v_3=0$ on $\partial\Omega_1$, and the fluid equations read
\begin{align}
    \begin{split}
    \llabel{EQ119}
    &v_t+v\cdot\nabla v+\nabla q=0  ,
    \\&
    \nabla\cdot v=0 \inin{\Omega_1},
    \\&
    v_3=0 \onon{\partial\Omega_1}
    .
    \end{split}
\end{align}
Choose 
\begin{align}
v_0(x)=
\left(
\sin x_2,\,
\left(\frac{\pi^2+2}{2}\cos(\pi(x_3-1))-1\right)\sin x_1,\,
0
\right)
   \llabel{EQ120}
\end{align}
and
\begin{equation}
q_0(x)=
\bigl(\cos(\pi(x_3-1))-1\bigr)\cos x_1\cos x_2.
   \llabel{EQ121}
     \end{equation}
A direct computation shows that $v_0$ is divergence-free, $v_3(\cdot,0)=0$ on
$\partial\Omega_1$, and $q_0$ satisfies the corresponding Neumann pressure
problem at $t=0$.
By the Euler equation,
\begin{align}
  \begin{split}
    v_t(0)=-v_0\cdot\nabla v_0-\nabla q_0.
  \end{split}
   \llabel{EQ122}
     \end{align}
A direct computation gives
\begin{align}
  \begin{split}
v_t(0,x)&=
\biggl(
-\frac{\pi^2}{2}\cos(\pi(x_3-1))\sin x_1\cos x_2,\,
-\frac{\pi^2}{2}\cos(\pi(x_3-1))\cos x_1\sin x_2,\,
\\&\indeq\indeq\indeq\indeq\indeq\indeq\indeq\indeq\indeq\indeq\indeq\indeq\indeq\indeq\indeq
\pi\sin(\pi(x_3-1))\cos x_1\cos x_2
\biggr).
  \end{split}
   \llabel{EQ123}
     \end{align}
The pressure satisfies
\begin{equation}
\Delta q=-\sum_{i,j=1}^3 \partial_i v_j\,\partial_j v_i.
   \llabel{EQ124}
     \end{equation}
Therefore,
\begin{equation}
\Delta q_t(0)
=
-2\sum_{i,j=1}^3
\partial_i v_{0j}\,\partial_j(v_t(0))_i.
   \llabel{EQ125}
     \end{equation}
Hence,
\begin{equation}
\begin{aligned}
\Delta q_t(0)
={}&
-\frac{\pi^2}{2}\cos(\pi(x_3-1))\sin x_1\sin(2x_2)
\\
&-
\frac{\pi^2}{2}
\left(\frac{\pi^2+2}{2}\cos(\pi(x_3-1))-1\right)
\cos(\pi(x_3-1))
\sin(2x_1)\sin x_2
\\
&-
\frac{\pi^2(\pi^2+2)}{2}
\sin^2(\pi(x_3-1))
\sin(2x_1)\sin x_2.
\end{aligned}
   \label{EQ126}
     \end{equation}
More precisely, $q_t(0)$ solves the Neumann problem
\begin{equation}
\begin{cases}
\Delta q_t(0)=
-2\displaystyle\sum_{i,j=1}^3
\partial_i v_{0j}\,\partial_j(v_t(0))_i,
& x\in \mathbb T^2\times(1,2),\\[6pt]
\partial_3 q_t(0)=0,
& x_3=1,2.
\end{cases}
   \llabel{EQ127}
     \end{equation}
We may also impose the normalization
\begin{equation}
\int_{\mathbb T^2\times(1,2)}q_t(0,x)\,dx=0.
   \llabel{EQ128}
     \end{equation}
From the above computation~\eqref{EQ126}, the right-hand side of $\Delta q_t$ contains the horizontal Fourier mode
\begin{equation}
-\frac{\pi^2}{2}\cos(\pi(x_3-1))\sin x_1\sin(2x_2).
   \llabel{EQ129}
     \end{equation}
Since the domain is the flat channel $\mathbb T^2\times(1,2)$, the Neumann Green operator diagonalizes with respect to the horizontal Fourier modes. Hence, the corresponding part of $q_t(0)$ has the form
\begin{equation}
\beta(x_3)\sin x_1\sin(2x_2),
   \llabel{EQ130}
     \end{equation}
where $\beta$ solves
\begin{equation}
\begin{cases}
\beta''(x_3)-5\beta(x_3)
=
-\dfrac{\pi^2}{2}\cos(\pi(x_3-1)),
& 1<x_3<2,\\[6pt]
\beta'(1)=\beta'(2)=0.
\end{cases}
   \llabel{EQ131}
     \end{equation}
Since $\cos(\pi(x_3-1))$ is itself a Neumann eigenfunction on $(1,2)$, the Neumann Green operator maps this source to a nonzero multiple of the same function. In particular,
\begin{equation}
\beta(1)\neq 0.
   \llabel{EQ132}
     \end{equation}
Consequently, $q_t(0)$ contains a nonzero boundary mode
\begin{equation}
c\sin x_1\sin(2x_2)
   \onon{x_3=1}
   \llabel{EQ133}
   ,
     \end{equation}
where $c\neq 0$.
Therefore,
\begin{equation}
q_t(0,x_1,x_2,1)\not\equiv 0.
   \llabel{EQ134}
     \end{equation}
Since
\begin{equation}
q_0(x_1,x_2,1)=0,
   \llabel{EQ135}
     \end{equation}
we obtain by Taylor expansion
\begin{equation}
q(t,x_1,x_2,1)
=
tq_t(0,x_1,x_2,1)+O(t^2).
   \llabel{EQ136}
\end{equation}
Since the pressure is determined only up to an additive function of time, one
may replace $q(t,x)$ by $q(t,x)+c(t)$. However, this freedom only changes
the spatially constant mode of the trace on $\mathbb T^2\times\{1\}$. The
nonzero boundary mode
\begin{equation}
c\sin x_1\sin(2x_2),
%,\qquad c\neq 0,
   \llabel{EQ137}
     \end{equation}
where $c\neq0$,
cannot be removed by adding a function of time. Therefore, for sufficiently
small $t>0$, there is no choice of $c(t)$ such that
\begin{equation}
q(t,x_1,x_2,1)+c(t)\equiv 0
\onon{\mathbb T^2}
.
   \llabel{EQ138}
     \end{equation}
Thus the pressure matching condition cannot be imposed on
$\mathbb T^2\times\{1\}$, even modulo the usual additive time-dependent
constant in the pressure.

\appendix
\section{Analyticity properties}\label{Appendix}
This appendix is devoted to auxiliary estimates in analytic norms. The
proof of Lemma~\ref{AL02} is given in~\cite{KOS} (see also \cite{JKL1,JKL2}). For completeness, we
provide the proofs of the remaining estimates, which are adapted to the
particular analytic norm used in this paper. 

\begin{lemma}[Loss of analytic radius under differentiation]
\label{AL01}
Let $0<c_0<\tilde\tau<\tau$. Then, for each coordinate derivative
$\partial_j$, with $j\in\{1,2,3\}$, one has
\begin{align}
\label{EQ139}
    \|\partial_j f\|_{X_{\widetilde\tau}}
    \lec_{c_0,\tilde\tau,\tau}
    \|f\|_{X_\tau}.
\end{align}
Consequently, for any multi-index $\alpha\in\mathbb{N}_0^3$, we obtain
\begin{align}
\label{EQ140}
    \|\partial^\alpha f\|_{X_{\widetilde\tau}}
    \lec_{c_0,\tilde\tau,\tau,\alpha}
    \|f\|_{X_\tau}.
\end{align}
\end{lemma}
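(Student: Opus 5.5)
The plan is to compare the two sums term by term after reindexing. By the definition \eqref{EQ20},
\[
\|\partial_j f\|_{X_{\tilde\tau}}
=\sum_{m=0}^\infty\sum_{|\alpha|=m}\epsilon^\alpha\tilde\tau^{(m-3)_+}M_m\|\partial^{\alpha}\partial_j f\|.
\]
Each $\partial^\alpha\partial_j f$ equals $\partial^\beta f$ with $\beta=\alpha+e_j$ and $|\beta|=m+1$. Conversely, a given $\beta$ with $|\beta|=m+1$ arises from at most three pairs $(\alpha,j)$. Also $\epsilon^\alpha=\epsilon^\beta\epsilon_j^{-1}\le 2\epsilon^\beta$, since $\epsilon=(1,1,\tfrac12)$. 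Hence
\[
\|\partial_j f\|_{X_{\tilde\tau}}\le 2\sum_{m=0}^\infty\sum_{|\beta|=m+1}\epsilon^\beta\,\tilde\tau^{(m-3)_+}M_m\|\partial^\beta f\|.
\]
It therefore suffices to bound the ratio of weights
\[
\rho_m:=\frac{\tilde\tau^{(m-3)_+}M_m}{\tau^{(m-2)_+}M_{m+1}}
\]
uniformly in $m\ge0$, because $\tau^{(m-2)_+}M_{m+1}$ is exactly the weight of $\|\partial^\beta f\|$ in $\|f\|_{X_\tau}$ when $|\beta|=m+1$.

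Next compute $\rho_m$. Since $M_m/M_{m+1}=(m+1)^4/(m+2)^3$, the ratio grows only linearly in $m$. For $m\le 2$ there are finitely many cases, and $\rho_m$ is a constant depending on $\tau$ (via $\tau^{-(m-2)_+}$, which is $1$ or $\tau^{-0}$). For $m= 3$ we get $\rho_3=\tau^{-1}(4^4/5^3)$. For $m\ge 4$,
\[
\rho_m=\frac1{\tilde\tau}\Bigl(\frac{\tilde\tau}{\tau}\Bigr)^{m-3}\frac{(m+1)^4}{(m+2)^3}\le \frac{C}{\tilde\tau}\,(m+1)\Bigl(\frac{\tilde\tau}{\tau}\Bigr)^{m-3}.
\]
Since $\tilde\tau/\tau<1$, the function $m\mapsto m\theta^{m}$ with $\theta=\tilde\tau/\tau$ is bounded by $C/(1-\theta)$, more precisely by $1/(e\log(1/\theta))$. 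This gives
\[
\sup_m\rho_m\lesssim \frac{1}{\tilde\tau}\cdot\frac{\tau}{\tau-\tilde\tau}+\tau^{-1}+1,
\]
which depends only on $c_0,\tilde\tau,\tau$. The lower bound $c_0$ is what makes the constant uniform when $\tilde\tau$ ranges near $c_0$, and prevents $1/\tilde\tau$ from blowing up. This proves \eqref{EQ139}.

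For \eqref{EQ140}, I will iterate \eqref{EQ139} $|\alpha|$ times along an intermediate chain of radii $\tilde\tau=\tau_{|\alpha|}<\dots<\tau_1<\tau_0=\tau$, for instance equally spaced. Each step applies \eqref{EQ139} with gap $(\tau-\tilde\tau)/|\alpha|$, so the constant is a product of $|\alpha|$ factors each depending on $c_0,\tilde\tau,\tau,|\alpha|$.

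The only delicate point is the weight bookkeeping. The exponent $(m-3)_+$ shifts by one under differentiation, and for large $m$ the linear factor $m$ coming from $M_m/M_{m+1}$ has to be absorbed by the geometric decay $(\tilde\tau/\tau)^m$. This absorption is exactly why a strict loss of radius is required.
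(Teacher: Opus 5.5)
Your proof is correct and follows the paper's route: you reindex $\partial^\alpha\partial_j f=\partial^{\alpha+e_j}f$, compare the weights $\tilde\tau^{(m-3)_+}M_m$ and $\tau^{(m-2)_+}M_{m+1}$ term by term, absorb the linear growth of $M_m/M_{m+1}=(m+1)^4/(m+2)^3$ by the geometric factor $(\tilde\tau/\tau)^{m}$, and then iterate for \eqref{EQ140}; your explicit chain of intermediate radii makes precise what the paper leaves as ``applying \eqref{EQ139} repeatedly.'' There are two harmless slips: for $m\ge4$ the exact prefactor is $\tau^{-1}(\tilde\tau/\tau)^{m-3}$ rather than $\tilde\tau^{-1}(\tilde\tau/\tau)^{m-3}$ (yours is an upper bound, so the conclusion stands), and for fixed $j$ each $\beta$ arises from at most one $\alpha$, not three.
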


\begin{proof}[Proof of Lemma~\ref{AL01}]
For each $j\in\{1,2,3\}$, the coordinate derivative $\partial_jf$ satisfies
\begin{align}
    \begin{split}
        \llabel{EQ141}
        \|\partial_j f\|_{X_{\tilde{\tau}}}
        &=\sum_{m=0}^\infty\sum_{|\alpha|=m}
        \tilde\tau^{(m-3)_+}\epsilon^\alpha M_m\|\partial^\alpha
        \partial_jf\|
        \\&\lec
        \sum_{m=1}^\infty \sum_{|\alpha|=m}
        \tilde\tau^{(m-4)_+}\epsilon^\alpha M_{m-1}\|\partial^\alpha f\|
        \\&=
        \sum_{m=1}^3\sum_{|\alpha|=m}\epsilon^\alpha M_{m}\|\partial^\alpha f\|
        \underbrace{\frac{M_{m-1}}{M_m}\left(\frac{\tilde\tau}{\tau}\right)^{m-4}}_{\lec 1}
        +\sum_{|\alpha|=4}\tau\epsilon^\alpha M_{m}
        \|\partial^\alpha f\|\underbrace{\tau^{-1}\frac{M_{m-1}}{M_m}
        \left(\frac{\tilde\tau}{\tau}\right)^{m-4}}_{\lec 1}
        \\&\indeq
        +\sum_{m=5}^\infty\sum_{|\alpha|=m}\tau^{(m-3)_+}\epsilon^\alpha M_{m}
        \|\partial^\alpha f\|
        \underbrace{\frac{M_{m-1}}{M_m}
        \left(\frac{\tilde\tau}{\tau}\right)^{m-4}\tau^{-1}}_{\lec 1}
        \\&
        \lec\|f\|_{X_\tau},
    \end{split}
\end{align}
which proves~\eqref{EQ139}. Applying~\eqref{EQ139} repeatedly to $f$, we obtain~\eqref{EQ140}.
\end{proof}
We next record the basic product estimate in analytic norms.

\begin{lemma}[Product estimate in analytic norms]
\label{AL02}
For any $f,g\in X_\tau$, we have
    \begin{align}
    \label{EQ143}
    \|fg\|_{X_\tau}\lec\|f\|_{X_\tau}\|g\|_{X_\tau}.
    \end{align}
Furthermore, if $g\in Y_\tau$, then we have
    \begin{align}
    \llabel{EQ142}
        \|f\nabla g\|_{X_\tau}\lec\|f\|_{X_\tau}(\|g\|_{X_\tau}+\|g\|_{Y_\tau}).
    \end{align}
\end{lemma}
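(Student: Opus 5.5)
The plan is to expand $\partial^\alpha(fg)$ by the Leibniz rule and estimate each term in $L^2$. In each term, the factor with fewer derivatives goes into $L^\infty$, and Sobolev/Agmon is used to trade that $L^\infty$ norm for $L^2$ norms of at most two additional derivatives. Then we sum with the weights $\epsilon^\alpha\tau^{(m-3)_+}M_m$. Concretely, for $|\alpha|=m$,
\[
\|\partial^\alpha(fg)\|\le\sum_{\beta\le\alpha}\binom{\alpha}{\beta}\|\partial^\beta f\,\partial^{\alpha-\beta}g\|.
\]
We split the sum into $|\beta|\le m/2$ and $|\beta|>m/2$. In the first range we use $\|\partial^\beta f\|_{L^\infty}\lec\sum_{|\gamma|\le 2}\|\partial^{\beta+\gamma}f\|$ (Sobolev embedding $H^2\subset L^\infty$ on the bounded Lipschitz domain $\Omega_1$), and symmetrically in the second range. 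Since $\epsilon^\alpha=\epsilon^\beta\epsilon^{\alpha-\beta}$, the $\epsilon$-weights factor exactly.

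The core of the argument is the combinatorial inequality. With $j=|\beta|$, we need
\[
\binom{m}{j}\,\tau^{(m-3)_+}M_m\lec \tau^{(j+2-3)_+}M_{j+2}\,\tau^{(m-j-3)_+}M_{m-j}
\]
(note $\binom{\alpha}{\beta}\le\binom{m}{j}$) for $j\le m/2$, uniformly in $m,j$, and possibly with a harmless factor depending on an upper bound for $\tau$ (here $\tau\le\tau_0$). Using $M_m=(m+1)^3/m!$, the factorials cancel:
\[
\binom{m}{j}\frac{1}{m!}=\frac{1}{j!\,(m-j)!}.
\]
This leaves the ratio
\[
\frac{(m+1)^3\,(j+2)!}{j!\,(j+3)^3\,(m-j+1)^3}\sim\frac{(m+1)^3(j+1)^2}{(j+3)^3(m-j+1)^3}.
\]
For $j\le m/2$ we have $m-j+1\gtrsim m+1$, so this ratio is $\lec (j+1)^{-1}$. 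The polynomial weight $(m+1)^3$ is exactly what absorbs the two derivatives lost to the embedding. The powers of $\tau$ balance because $(m-3)_+\le (j-1)_+ +(m-j-3)_+ +2$, with a bounded power of $\tau_0$ making up any surplus.

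Summing over $\beta$ then gives a Cauchy-product type sum of the form
\[
\sum_j (j+1)^{-1}a_{j+2}\,b_{m-j},
\]
where $a_k,b_k$ are the weighted $k$-th level seminorms of $f$ and $g$. The missing decay in $j$ is recovered by summing over $m$ first, which reduces everything to a Young inequality $\ell^1*\ell^1\subset\ell^1$. This yields~\eqref{EQ143}.

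For the second estimate we apply the same scheme to $f\,\partial_k g$. When $\partial^{\alpha-\beta}$ falls mostly on $\partial_k g$, we get $m-j+1$ derivatives on $g$, one more than the $X_\tau$ weight at level $m$ allows. These terms are measured with the $Y_\tau$ weight $\tau^{m-4}mM_m$. The extra factor of $m$ in the $Y_\tau$ weight, compared with $M_{m+1}/M_m\sim 1/m$, together with $\tau^{m-4}$ versus $\tau^{m-3}$, exactly compensates for the extra derivative. Low $m$ (below $4$) is handled by the $X_\tau$ norm of $g$ directly. When most derivatives fall on $f$, $\nabla g$ goes into $L^\infty$ via three derivatives of $g$, controlled by $\|g\|_{X_\tau}$.

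The main obstacle I expect is bookkeeping the $(m-3)_+$ threshold in the $\tau$-exponents and the low-order ($m\le4$) terms consistently, especially in the $Y_\tau$ case. I would handle finitely many low $m$ by direct Sobolev estimates and apply the uniform combinatorial bound only for $m\ge 8$.
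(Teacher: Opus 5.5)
Your proposal is correct and is essentially the standard argument behind this lemma: Leibniz rule, splitting at $|\beta|\le m/2$, $H^2\subset L^\infty$ on the factor with fewer derivatives, the cancellation $\binom{m}{j}/m!=1/(j!(m-j)!)$ against the weights $M_m$, and a discrete Young inequality; the paper gives no proof of its own and defers to the cited works, which argue this way. For the second estimate there is a shortcut: reindexing $\mu=\alpha+e_k$ and using $M_m/((m+1)M_{m+1})=(m+1)^3/(m+2)^3\le 1$ gives $\|\nabla g\|_{X_\tau}\lec\|g\|_{X_\tau}+\|g\|_{Y_\tau}$, so the second bound follows directly from \eqref{EQ143} without redoing the Leibniz analysis.
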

As an application of the product estimate, we prove a quotient estimate in analytic norms.
The key point is that,
under a suitable smallness assumption, the reciprocal can be represented by a
convergent geometric series in~$X_\tau$.

\begin{lemma}[Reciprocal and quotient estimates in analytic norms]
    \label{AL03}
    Let $g\in X_\tau$. Suppose that there exist constants $r\neq 0$
and $c_0\in(0,1/C)$,
where $C$ is the constant provided in~\eqref{EQ143}, such that $\|g-r\|_{X_\tau}<c_0|r|<|r|$ and $g>0$. Then we obtain
    \begin{align}
        \left\|\frac{1}{g}\right\|_{X_\tau}\lec |r|^{-1}
     .
        \label{EQ145}
    \end{align}
Additionally, we have 
    \begin{align}
        \left\|\frac{f}{g}\right\|_{X_\tau}\lec |r|^{-1}{\|f\|_{X_\tau}}
       .
        \label{EQ144}
    \end{align}
\end{lemma}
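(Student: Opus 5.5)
The plan is to write $1/g$ as a Neumann series around the constant $r$ and sum it in $X_\tau$ using the product estimate \eqref{EQ143}. Let $C\ge1$ be the constant in \eqref{EQ143}, so that $\|fh\|_{X_\tau}\le C\|f\|_{X_\tau}\|h\|_{X_\tau}$. Set $h:=(g-r)/r$. Then
\[
\frac1g=\frac1r\cdot\frac{1}{1+h},
\qquad
\|h\|_{X_\tau}=|r|^{-1}\|g-r\|_{X_\tau}<c_0 .
\]
By induction on \eqref{EQ143}, $\|h^k\|_{X_\tau}\le C^{k-1}\|h\|_{X_\tau}^k\le C^{-1}(Cc_0)^k$ for $k\ge1$. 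Since $Cc_0<1$, the series $\sum_{k\ge0}(-h)^k$ converges absolutely in $X_\tau$, and
\[
\Bigl\|\sum_{k\ge0}(-h)^k\Bigr\|_{X_\tau}\le \|1\|_{X_\tau}+\frac{c_0}{1-Cc_0}\lesssim 1 .
\]
Here $\|1\|_{X_\tau}=M_0|\Omega|^{1/2}$ is a fixed constant.

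Next I will identify this sum with $1/(1+h)$. Convergence in $X_\tau$ implies convergence in $L^2$, because the $m=0$ term of the norm is the $L^2$ norm. The partial sums $S_N$ satisfy $(1+h)S_N=1-(-h)^{N+1}$. The remainder $(-h)^{N+1}$ tends to $0$ in $X_\tau$, and therefore in $L^\infty$, since finitely many derivatives in $L^2$ control the sup norm via Sobolev embedding with the $\tau$-independent low-order weights. Hence the limit $S$ satisfies $(1+h)S=1$ a.e. The hypothesis $g>0$ guarantees that $1/g$ is a genuine function, so $S=r/g$. This gives $\|1/g\|_{X_\tau}\le |r|^{-1}\|S\|_{X_\tau}\lesssim |r|^{-1}$, which is \eqref{EQ145}.

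Finally, \eqref{EQ144} follows directly by applying \eqref{EQ143} to $f\cdot(1/g)$:
\[
\|f/g\|_{X_\tau}\le C\|f\|_{X_\tau}\|1/g\|_{X_\tau}\lesssim |r|^{-1}\|f\|_{X_\tau}.
\]

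The only delicate point is the identification of the series limit with $1/g$. I would handle it through the $L^\infty$ control of the $X_\tau$ norm. Alternatively, I would use pointwise a.e. convergence along a subsequence in $L^2$, together with the uniform bound $\|h\|_{L^\infty}\lesssim\|h\|_{X_\tau}<c_0$ (after possibly shrinking $c_0$), which gives $|h|<1$ pointwise. The rest is the routine geometric-series bound.
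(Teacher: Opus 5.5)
Your proposal is correct and follows the paper's proof. Both expand $1/g$ as the Neumann series in $\tilde g=(g-r)/r$, sum it in $X_\tau$ using the product estimate \eqref{EQ143} together with $Cc_0<1$, and then obtain the quotient bound from one more application of \eqref{EQ143}. Your extra step, identifying the $X_\tau$-limit of the series with $r/g$ through the $\tau$-independent $H^3\subset L^\infty$ control built into the $X_\tau$ norm, is a useful justification that the paper leaves implicit.
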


\begin{proof}[Proof of Lemma~\ref{AL03}]
    It suffices to prove the estimate~\eqref{EQ145} since \eqref{EQ144} follows immediately from \eqref{EQ145} and Lemma~\ref{AL02}.
    Set
    \begin{equation}
    \tilde g=\frac{g-r}{r}.
   \llabel{EQ146}
     \end{equation}
By assumption,
    \begin{equation}
    \|\tilde g\|_{X_\tau}<c_0<1.
   \llabel{EQ147}
     \end{equation}
Since $Cc_0<1$, the Neumann series converges in $X_\tau$ and gives
    \begin{equation}
    \frac{1}{g}=\frac{1}{r(1+\tilde g)}=\frac{1}{r}\sum_{k=0}^\infty (-\tilde{g})^{k}
    .
   \llabel{EQ148}
     \end{equation}
Using Lemma~\ref{AL02}, we have
    \begin{equation}
    \left\|\frac{1}{g}\right\|_{X_\tau}\leq \frac{1}{|r|}\sum_{k=0}^\infty C^k\|\tilde g\|^k_{X_\tau}\lec
    \frac{1}{|r|}
    \sum_{k=0}^\infty(Cc_0)^k\lec_{c_0} \frac{1}{|r|},
   \llabel{EQ149}
     \end{equation}
which concludes the proof. 
\end{proof}

\section*{Acknowledgments}
\rm
IK and QX were supported in part by the NSF grant DMS-2205493.

\colb

\colb

\ifnum\sketches=1
\newpage
\begin{center}
  \bf   Notes?\rm
\end{center}
\huge
\colb

\fi

\end{document}